\documentclass[11pt]{article}
\usepackage[utf8]{inputenc}
\usepackage{amsmath,amsthm,amsfonts,color,cite}
\usepackage{amssymb}
\usepackage{graphicx,ulem}
\usepackage{setspace}
\usepackage{mathrsfs}
\usepackage{graphicx}
\usepackage[shortlabels]{enumitem}
\usepackage{calc}
\usepackage{tikz}
\usepackage{geometry}
\usepackage{pgffor}%可以使用foreach的包
\usepackage{ifthen}%可以使用ifthenelse的包，还能使用whiledo
\usetikzlibrary{arrows.meta}%画箭头用的包
\usetikzlibrary{decorations.markings}
\tikzstyle{vertex}=[circle, draw, inner sep=2pt, minimum size=6pt]
\tikzstyle{filledvertex}=[circle, draw, fill, inner sep=2pt, minimum size=6pt]

\usetikzlibrary{shapes.geometric}
\newtheorem{theorem}{Theorem}
\newtheorem{conjecture}[theorem]{Conjecture}
\newtheorem{lemma}[theorem]{Lemma}

\newtheorem{claim}[theorem]{Claim}
\newtheorem{corollary}[theorem]{Corollary}
\newtheorem{remark}[theorem]{Remark} 
\numberwithin{theorem}{section}
\numberwithin{equation}{section}

\begin{document}
\setstretch{1.25}
\title{Towards a strengthening of the second neighborhood conjecture}
\author{Yandong Bai
\thanks{School of Mathematics and Statistics, Xi’an--Budapest Joint Research Center for Combinatorics, Northwestern Polytechnical University, Xi'an 710129, China; Research $\&$ Development Institute of Northwestern Polytechnical University in Shenzhen, Shenzhen 518057, China. E-mail: {\tt bai@nwpu.edu.cn}.}
\qquad Binlong Li
\thanks{School of Mathematics and Statistics, Xi’an--Budapest Joint Research Center for Combinatorics, Northwestern Polytechnical University,
Xi'an 710129, China. E-mail: {\tt binlongli@nwpu.edu.cn}.}
\qquad Boram Park
\thanks{Department of Mathematics Education, Seoul National University, Seoul 08826, Republic of Korea. E-mail: {\tt borampark@snu.ac.kr}.}
} 

\maketitle

\begin{abstract}
A longstanding conjecture of Seymour, called Seymour's second neighborhood conjecture, states that every oriented graph $D$ contains a vertex $x$ with $|N^{++}_D(x)|\geq |N^{+}_D(x)|$. The conjecture was verified in a few special classes of oriented graphs, and it remains open for general oriented graphs. We study a stronger property, asking for a vertex $x$ such that there exists a complete matching from $N^+_D(x)$ to $N^{++}_D(x)$. We prove that this stronger version holds for every oriented graph with minimum out-degree at most $5$, and also for every $5$-anti-transitive oriented graph. This implies that every oriented planar graph satisfies the stronger version.
\end{abstract}

%%%%%%%%%%%%%%
\section{Introduction}
%%%%%%%%%%%%%%

A directed graph (or \textit{digraph} for short) 
has no multiple edges or loops. Let $V(D)$ and $A(D)$ be the vertex set and the arc set of a digraph $D$, respectively.
An \textit{oriented} graph is a digraph without 2-cycles.
For a cycle (resp. path) in a digraph,
we always mean a \textit{directed} cycle (resp. directed path).
The \textit{length} of a cycle or a path is the number of arcs in the cycle or path.  

For a vertex $v$ of a digraph $D$, denote by $N_D^+(v)$ and $N_D^-(v)$ the sets of out-neighbors and in-neighbors of $v$ in $D$, respectively; their cardinalities, denoted by $d_D^+(v)$ and $d_D^-(v)$, are called the \textit{out-degree} and \textit{in-degree} of $v$, respectively. We also denote by 
$\delta^+(D)$ and $\Delta^+(D)$ the minimum out-degree and the maximum out-degree of $D$, respectively.   
We call $N^{++}_D(v)$ the \textit{second out-neighborhood} of $v$, that is, 
\[N^{++}_D(v)=\{z\in V(D)\setminus (N_D^+(v)\cup\{v\}): (v,y), (y,z)\in A(D) \text{ for some }y\in N_D^+(v)\}.\] We denote by $d^{++}_D(v)=|N^{++}_D(v)|$.  
A vertex $v$ is a \textit{Seymour vertex} of $D$ if $d^{++}_D(v)\geq d^+_D(v)$. In 1990, Seymour  proposed the following conjecture, called \textit{the second neighborhood conjecture}.

\begin{conjecture}[\cite{DL95}]\label{conj:SSNC}
Every oriented graph contains a Seymour vertex.
\end{conjecture}

In a complete symmetric digraph, every two vertices form a $2$-cycle and each vertex has an empty second out-neighborhood. Thus, not every digraph contains a Seymour vertex, and the assumption that the digraph is an oriented graph cannot be omitted. The restriction of Conjecture~\ref{conj:SSNC} to tournaments was called Dean’s conjecture. It was first verified by Fisher~\cite{Fisher1996}  and  Havet and Thomass\'{e}~\cite{Havet2000} provided another proof using median orders.
The second neighborhood conjecture has received considerable attention over the years (see \cite{AGGWYZ2024, BM25, CC2023, D20, HP24, BBKS2009, DFJN2022, LX2017, DGGK2024, FY2007, G2012,  KanekoLocke2001,   Lim2020, Llado2013,H+21}). 

Kaneko and Locke~\cite{KanekoLocke2001} showed that Conjecture~\ref{conj:SSNC} holds for oriented graphs $D$ with $\delta^+(D)\le 6$. A digraph $D$ is \textit{$k$-anti-transitive} if $(v_0,v_k)\not\in A(D)$ for every path $v_0v_1\cdots v_k$ of length $k$. Let $at(D)$ be the smallest $k$ such that $D$ is $k$-anti-transitive. Daamouch~\cite{D20} proved Conjecture~\ref{conj:SSNC}  when  $at(D)\le 5$ for an oriented graph $D$, and 
Hassan, Khan, Poshni, Shabbir~\cite{H+21} proved
 the analogous result for oriented graphs $D$ with
$at(D)\le 6$. 
 Despite various results on special cases and attempts to tackle the original form, Conjecture~\ref{conj:SSNC} remains open for general oriented graphs.

A \textit{matching} of an undirected graph is a set of disjoint edges. For two disjoint vertex sets $X$ and $Y$ in an oriented graph $D$, a \textit{matching} from $X$ to $Y$ is  a set $M=\{(x_i,y_i) \in A(D) \mid x_i\in X, y_i\in Y, i=1,\ldots,m\}$ of arcs when  $\{x_iy_i \mid i=1,\ldots,m\}$ is a matching in the underlying graph. If every vertex of $X$ is a tail of some arc of a matching $M$, then it is called a \textit{complete matching} from $X$ to $Y$. 
We call a vertex $x$ a \textit{strong Seymour vertex} in an oriented graph  $D$ if there is a complete matching from $N^+_D(x)$ to $N^{++}_D(x)$. 
Clearly, a strong Seymour vertex is also a Seymour vertex,
but the converse does not hold in general; 
see, e.g., an oriented graph in Figure~\ref{fig:ex}, in which $x$ is a Seymour vertex but is not a strong Seymour vertex.

\begin{figure}[ht]
\centering
\begin{tikzpicture}
[scale=0.6, myarrow/.style={postaction={decorate, decoration={pre length=0.1cm, markings, mark=at position #1 with {\arrow{stealth}}}}}, myarrow/.default=0.5]
\foreach \x in {1,2,...,5} {\draw[fill=black] (\x*72+18:2.6) {coordinate (x\x)} circle (0.1);
\coordinate (y\x) at (\x*72+18:3.15);}
\node at (y1) {$x$}; \node at (y2) {$y_1$}; \node at (y3) {$z_1$}; 
\node at (y4) {$z_2$}; \node at (y5) {$y_2$};
\draw[myarrow] (x1)--(x2);
\draw[myarrow] (x1)--(x5);
\draw[myarrow] (x2)--(x3);
\draw[myarrow] (x2)--(x4);
\draw[myarrow] (x3)--(x4);
\draw[myarrow] (x3)--(x5);
\end{tikzpicture}
\caption{An oriented graph.}
\label{fig:ex}
\end{figure}

The following  is stronger than Conjecture~\ref{conj:SSNC}.

\begin{conjecture}\label{conj:SMC}
Every oriented graph contains a strong Seymour vertex.
\end{conjecture}

It is clear from the definition that if Conjecture~\ref{conj:SMC} is true then
Conjecture~\ref{conj:SSNC} is true. This property raises new challenges in the
case of tournaments, where the existing proofs of Conjecture~\ref{conj:SSNC} do
not apply directly: the median order method, which provides an elegant proof of
Conjecture~\ref{conj:SSNC} for tournaments, no longer works here. For this
reason we state the tournament case separately.

% It is clear from the definition that if Conjecture~\ref{conj:SMC} is true then  Conjecture~\ref{conj:SSNC} is true.
% This conjecture is interesting not only because it implies the original one, but also because it raises new challenges in the case of tournaments, where existing proofs do not apply directly and a new approach is needed. 
% Even in tournaments, it remains an open problem to find the answer to the above conjecture. The median order method for tournaments, which provides an elegant proof of Conjecture~\ref{conj:SSNC}, no longer works for Conjecture~\ref{conj:SMC}. Therefore, even the tournament case is interesting in its own right, so we state the conjecture separately.

\begin{conjecture}\label{conj:SMCinT}
Every tournament contains a strong Seymour vertex.
\end{conjecture}

After the first version of this paper appeared, David Dzitsoev was the first to communicate to us a
$36$-vertex oriented graph with no strong Seymour vertex, disproving
Conjecture~\ref{conj:SMC}. Replacing each independent set of that construction
by a transitive tournament yields a $36$-vertex tournament with the same
property, so Conjecture~\ref{conj:SMCinT} is false as well. The construction is presented in
Section~\ref{sec:tour}.

Nevertheless, a strong Seymour vertex does exist in several natural classes.
Our main results are the following.

\begin{theorem}\label{thm:SMC-MinOutdegree}
Let $D$ be an oriented graph with $\delta^+(D)\le 4$. There is a vertex with a minimum out-degree that is also a strong Seymour vertex. 
\end{theorem}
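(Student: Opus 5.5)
We argue by contradiction, using Hall's theorem as the central tool. Let $x$ be a vertex with $d^+(x)=\delta^+(D)=\delta\le 4$, and write $A=N^+(x)$ and $B=N^{++}(x)$. By Hall's theorem, $x$ fails to be a strong Seymour vertex exactly when some nonempty $S\subseteq A$ satisfies $|N^+(S)\cap B|<|S|$. Since $D$ is oriented, every $y\in S$ has all $d^+(y)\ge\delta$ of its out-neighbours inside $A\cup B$ (it cannot go back to $x$). Hence every out-neighbour of $y$ outside $N^+(S)\cap B$ lies in $A$. So the sub-digraph $D[A]$, together with the small set $T_S=N^+(S)\cap B$, must absorb a lot of out-degree. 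For each minimum-out-degree vertex $x$, I would fix such a violating set $S_x$. Among the candidates I would take the lexicographically extremal choice: $x$ of minimum out-degree with $|S_x|$ minimal, and then, for instance, $|T_{S_x}|$ maximal.

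The first step is a counting argument inside $A$. The vertices of $S$ have total out-degree at least $\delta|S|$, and at most $|S|\,|T_S|\le |S|(|S|-1)$ of these arcs can go to $T_S$. The rest go into $A$. Since $D[A]$ is oriented on $\delta\le 4$ vertices, a vertex of $S$ has out-degree at most $\delta-1$ inside $A$, and the arcs inside $S$ number at most $\binom{|S|}{2}$. From this I would derive strong structural constraints. When $|S|$ is small, some $y\in S$ must have out-degree exactly $\delta$. Moreover, nearly all of $y$'s out-neighbourhood must lie in $A\cup T_S$, and $T_S$ must be tiny.

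The second step is to switch to such a $y$. Because $d^+(y)=\delta$, the vertex $y$ is itself a minimum out-degree vertex. Its out-neighbourhood $N^+(y)$ is contained in $(A\setminus\{y\})\cup T_S$, so its second out-neighbourhood is closely tied to $N^+(A)$. The plan is to show that either $y$ is a strong Seymour vertex, or its own violating set $S_y$ contradicts the extremal choice of $x$ (for example by giving a strictly smaller Hall violator, or a larger $T$). This is the Kaneko--Locke style "chain of minimum-degree vertices" argument. Finiteness of $D$, together with orientation, prevents an infinite descent. Concretely, I would show that the successive $y$'s trace a walk in which each step stays within $A\cup T_S$, which is a set of at most $2\delta-1$ vertices. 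A set that small with all out-degrees at least $\delta$ contains no oriented graph, since an oriented graph on $m$ vertices has a vertex of out-degree at most $(m-1)/2<\delta$. This gives the contradiction.

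The main obstacle is the case analysis for $\delta=3,4$ with $|S|\in\{2,3,4\}$. There, arcs from $S$ may leave $A\cup T_S$ only in limited but nonzero ways, namely into $B\setminus T_S$, which is disallowed, or into $A\setminus S$. The difficulty is ensuring that the closed set used for the final density contradiction really is closed under out-neighbourhoods of the relevant minimum-degree vertices. My first attempt would be the following. Take $W=S\cup T_S$ and consider the vertices of $S$ with no out-neighbour in $A\setminus S$. If every vertex of $S$ has an out-neighbour in $A\setminus S$, replace $S$ by a minimal violator and use the vertices of $A\setminus S$ with the same argument, iterating. For the remaining configurations, I expect a direct enumeration of oriented graphs on $A$ with $|A|\le 4$, which is finite and small, to close the case.
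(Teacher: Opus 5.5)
\textbf{Verdict: there is a genuine gap.} The mechanism you use to finish does not work, and the step that actually proves the theorem is never carried out.

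\textbf{Why the closing argument fails.} Your final contradiction needs a vertex set closed under out-neighbourhoods. The set $A\cup T_S$ is not closed. Every vertex of $T_S=N^+(S)\cap B$ has out-degree at least $\delta$, and in the critical configurations most of those arcs must leave $A\cup T_S$: when $S$ dominates $T_S$, orientation forbids any arc back into $S$.

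Your walk of minimum-degree vertices also escapes $A\cup T_S$ once it passes through a vertex of $T_S$. The next vertex is taken from a violator inside that vertex's out-neighbourhood, which need not lie in $A\cup T_S$. Revisiting vertices of a finite set is not a contradiction without a strictly monotone potential, and you do not have one. For a minimal violator one always has $|N^+(S)\cap B|=|S|-1$ (Lemma~\ref{lemma:observation}~(2)), so the tie-break ``$|T_{S_x}|$ maximal'' is vacuous. You also never show that $S_y$ improves on $S_x$.

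More fundamentally, failure at $x$ is easy to realise: let $N^+(x)$ induce a directed triangle all of whose vertices dominate a fixed $2$-set. So no argument showing only that the local picture around $x$ is impossible can succeed. The contradiction must come from exhibiting another minimum-degree vertex that does have a complete matching.

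\textbf{The missing idea.} That verification is what is missing, and one step suffices, with no extremal choice of $x$.
\begin{itemize}
\item Take a minimal violator $S$, with $S^+=N^+(S)\setminus N^+(x)$ and $T=N^+(x)\setminus S$. Then $|S^+|=|S|-1$ and $\delta^+(D[S])\ge 1$, hence $|S|\in\{3,4\}$.
\item A vertex $v_1\in S$ with $d^+_S(v_1)=1$ (not merely $d^+(v_1)=\delta$) dominates $S^+\cup T$. So $N^+(v_1)=\{v_2\}\cup S^+\cup T$.
\item An arc $(v_2,v_3)$ inside $S$ covers $v_2$. It remains to check Hall's condition from $S^+\cup T$ into $V(D)\setminus(S\cup S^+\cup T)$. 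Every such out-neighbour of $S^+\cup T$ lies in $N^{++}(v_1)\setminus\{v_3\}$.
\item When $|S|=3$, all of $S$ dominates $S^+\cup T$. A $k$-set $X\subseteq S^+\cup T$ sends at most $\binom{k}{2}+k(\delta-1-k)$ of its at least $\delta k$ out-arcs inside $S\cup S^+\cup T$. At least $k+\binom{k+1}{2}>k(k-1)$ arcs remain outside, so Lemma~\ref{lemma:outdegre-sum} applies.
\item When $|S|=4$, there are at least $10$ arcs from $S$ to $S^+$, hence at most $5$ arcs from $S^+$ into $S\cup S^+$. This again yields Hall's condition. The paper handles separately the subcase where some $z\in S^+$ has no out-neighbour outside $S\cup S^+$, and shows that $z$ itself works there.
\end{itemize}
This argument concerns arcs leaving $A\cup S^+$. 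So the finite enumeration of oriented graphs on $A$ that you propose for the remaining cases cannot replace it.
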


\begin{theorem}\label{thm:main} Let $D$ be an oriented graph such that  $\delta^+(D)\le 5$ or $at(D)\le 5$. Then $D$ has a strong Seymour vertex. 
\end{theorem}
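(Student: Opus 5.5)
We argue by contradiction, taking $D$ to be a counterexample with $|V(D)|$ minimum, and reducing to the structure used in Theorem~\ref{thm:SMC-MinOutdegree}. The basic tool is Hall's theorem applied to the bipartite graph $H_x$ between $N^+(x)$ and $N^{++}(x)$, where $y\sim z$ if $(y,z)\in A(D)$. Vertex $x$ fails to be strong Seymour exactly when some nonempty $S\subseteq N^+(x)$ satisfies $|N^+(S)\cap N^{++}(x)|<|S|$. Since $N^+(S)\subseteq N^+(x)\cup N^{++}(x)$, and $S$ dominates nothing outside $N^+[x]\cup N^{++}(x)$ other than vertices of $N^+(x)$, any Hall violator $S$ sends many of its out-arcs into $N^+(x)$. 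As $D$ is oriented, the subdigraph $D[N^+(x)]$ contains a vertex of out-degree at most $(|N^+(x)|-1)/2$ within it. This is the lever for minimum-degree arguments.

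\textbf{Case $\delta^+(D)\le 5$.} If $\delta^+\le 4$, we are done by Theorem~\ref{thm:SMC-MinOutdegree}, so assume $\delta^+(D)=5$. Pick $x$ with $d^+(x)=5$ and suppose $S\subseteq N^+(x)$ violates Hall, with $|S|=s$. Every $y\in S$ has $d^+(y)\ge5$ and $|N^+(y)\cap N^{++}(x)|\le s-1$, so $y$ has at least $6-s$ out-neighbours in $N^+(x)$. For small $s$ this is impossible by counting: $D[S]$ has at most $\binom s2$ arcs, and $N^+(x)\setminus S$ has only $5-s$ vertices. The analysis reduces to a handful of configurations for $D[N^+(x)]$, essentially near-regular tournaments on $5$ vertices. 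In these configurations the vertices of $N^+(x)$ have small out-degree into $N^{++}(x)$.

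We then replace $x$ by a suitable vertex $x'\in N^+(x)$ that is also of out-degree $5$, or contract/delete to obtain a smaller digraph. The standard Kaneko--Locke device is to delete $N^+[x]$-related arcs or vertices and apply minimality, then lift the strong Seymour vertex back. Lifting requires checking that the matching survives, since deleting vertices can only shrink $N^{++}$ of other vertices if we are careful. To control this, I would delete a vertex $v$ with $d^-(v)$ minimal among sinks of the configuration. Alternatively, I would remove all arcs inside some set so that out-neighbourhoods are unchanged and second neighbourhoods only grow in $D$.

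\textbf{Case $at(D)\le5$.} Here the key property is that for any path $xy z$, $z\notin N^+(x)$ whenever the path lengths force anti-transitivity. In particular, $k$-anti-transitivity for $k\le 5$ limits arcs inside $N^+(x)$: a path of length $k-1$ inside $N^+(x)$ starting at $y$ together with $x\to y$ forbids the final arc back from $x$. So $D[N^+(x)]$ contains no path of length $k-2$ with both ends in it, i.e. it has bounded path length ($\le 3$ arcs). Such oriented graphs have a vertex partition into at most $4$ layers (Gallai--Roy), which gives strong sparsity of $D[S]$. Combined with the Hall count above, with each $y\in S$ now sending at least $d^+(y)-|N^+(y)\cap N^+(x)|$ arcs to $N^{++}(x)$, this should show that choosing $x$ of minimum out-degree gives no Hall violator except in small explicit configurations. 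Those configurations are handled as in the first case.

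The main obstacle is the lifting step for $\delta^+=5$: Hall's condition is not monotone under vertex deletion in the way the cardinality inequality is. Unlike the Seymour-vertex setting, deleting vertices can destroy a matching edge. I would first try arc-deletion inside $N^+(x)$, which preserves out-neighbourhoods of others up to arcs that only enlarge second neighbourhoods, and verify that the matching found in the reduced digraph remains valid in $D$.
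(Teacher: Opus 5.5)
Your proposal is an outline, and the step that carries the proof is missing.

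\textbf{The case $\delta^+(D)=5$.} Your count is correct: each $y$ in a Hall violator $S\subseteq N^+(x)$ has at least $6-|S|$ out-neighbours in $N^+(x)$. After that you only assert that the analysis ``reduces to a handful of configurations'' and that one then ``replaces $x$ by a suitable $x'$ or contracts/deletes''. Neither the configurations nor the replacement rule is given, and you leave the lifting step open yourself. The configuration claim is also wrong at an arbitrary $x$. A directed triangle $S$ that dominates $T=N^+(x)\setminus S$ and a two-vertex $S^+$ satisfies your count, and it is not a near-regular tournament on five vertices.

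What drives the paper's proof is the following.
(i) For a \emph{minimal} violator $S$ at a minimum-out-degree vertex, $|S^+|=|S|-1$ and $\delta^+(D[S])\ge 1$. Moreover, every $y$ with $d^+_S(y)=1$ has $d^+(y)=\delta$ and dominates $S^+\cup T$ (Lemma~\ref{lemma:observation}).
(ii) The centre $x_0$ is chosen extremally: maximise $\alpha$, then minimise $\beta$, then lexicographically maximise the out-degree sequence of $D[N^+(x_0)]$. Each bad local configuration is then refuted in one of three ways: by exhibiting a better centre, by the arc-deletion Claim~\ref{Clxyz}, or by deleting two arcs to reduce to Theorem~\ref{thm:SMC-MinOutdegree}.
(iii) At the end, one takes a vertex $u\in S_2$ with $d^+(u)=5$ that is strong Seymour in $D[S]$. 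Its matching inside $D[S]$ is extended to the three out-neighbours of $u$ in $S^+$ by Lemma~\ref{lemma:outdegre-sum}.
Nothing in your sketch plays these roles.

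\textbf{Your reduction devices.} They also fail as stated.
\begin{itemize}
\item Vertex deletion does not lift directly, because the deleted out-neighbours of the lifted vertex still have to be matched.
\item Arc deletion is the right tool, but you minimise only $|V(D)|$, so deleting an arc gives no smaller instance. You must minimise the number of arcs as well.
\item Even then, deleting $(x,y)$ lifts only strong Seymour vertices $u\ne x$ of the reduced digraph. The case $u=x$ needs the extra hypothesis of Claim~\ref{Clxyz}: with $(y,z)\in A(D)$ and $(x,z)\notin A(D)$, there is no arc from $N^+(x)\setminus\{y\}$ to $\{y,z\}$. This is what lets you append $(y,z)$ to the matching.
\end{itemize}

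\textbf{The case $at(D)\le 5$.} Bounded path length gives no sparsity. Orient every edge of a complete bipartite graph from one side to the other: the result is dense and has no path of length $2$. So with $\delta$ unbounded your Hall count has nothing to act on, and ``handled as in the first case'' defers to a case you have not proved.

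The mechanism that works is this. Since $\delta^+(D[S])\ge 1$ for the minimal violator, Lemma~\ref{LePath4}(2) gives a path of length $4$ in $D[S]$ unless two vertices of $S_1$ are adjacent. That exceptional case is removed by the $\alpha$-maximal choice of $x_0$: Claim~\ref{ClAdjacentinS1} yields $y_0\in S$ dominating $S\setminus\{y_0\}$. This again gives a path of length $4$ in $D[N^+(x_0)]$, contradicting Lemma~\ref{lem:at}. That disposes of $at(D)\le 5$ with $\delta\ge 6$ and reduces everything to $\delta=5$.
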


The condition $\delta^+(D)\le 4$ in Theorem~\ref{thm:SMC-MinOutdegree} is essential since there is a tournament $D$ with $\delta^+(D)=5$ such that a vertex with a minimum out-degree cannot be a  strong Seymour vertex, which is discussed in Section~\ref{sec:tour}. 
Theorem~\ref{thm:main} extends the result of Daamouch~\cite{D20} to a strong Seymour vertex, where the result
was the existence of a Seymour vertex in a class of oriented graphs $D$ with $at(D)\le 5$. Moreover, since a planar graph has minimum degree at most five, the following immediately holds from Theorem~\ref{thm:main}.

\begin{corollary} 
Conjecture~\ref{conj:SMC} holds for planar oriented graphs.
\end{corollary}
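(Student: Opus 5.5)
The corollary follows from Theorem~\ref{thm:main} once we show $\delta^+(D)\le 5$ for every planar oriented graph $D$. The plan is to derive this from the classical fact that every simple planar graph has a vertex of degree at most five. That fact gives a vertex of small \emph{total} degree, not of small out-degree, so we use a counting argument instead of applying it naively.

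Let $D$ be a planar oriented graph on $n$ vertices, and let $G$ be its underlying graph. Since $D$ has no 2-cycles, $G$ is simple, and each arc of $D$ corresponds to exactly one edge of $G$, so $|A(D)|=|E(G)|$.
\begin{itemize}
\item If $n\le 2$, some vertex has out-degree at most $1$, and there is nothing more to do.
\item If $n\ge 3$, Euler's formula gives $|E(G)|\le 3n-6$.
\end{itemize}
Summing out-degrees in the case $n\ge 3$,
\[
\sum_{v\in V(D)} d^+_D(v)=|A(D)|\le 3n-6<3n .
\]
Hence some vertex has out-degree at most $2$, so in fact $\delta^+(D)\le 2\le 5$. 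The stronger bound is harmless; in particular Theorem~\ref{thm:SMC-MinOutdegree} would also apply.

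Theorem~\ref{thm:main} now yields a strong Seymour vertex in $D$. Since $D$ was an arbitrary planar oriented graph, Conjecture~\ref{conj:SMC} holds for all planar oriented graphs.

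There is no real obstacle. The one point needing care is that the paper's phrase ``minimum degree at most five'' concerns the undirected degree. Out-degree is what Theorem~\ref{thm:main} requires, and the averaging argument above supplies it.
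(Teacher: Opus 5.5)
Your proof is correct, but your stated reason for avoiding the naive route is mistaken, and that naive route is exactly what the paper uses. The paper only notes that a planar graph has a vertex $v$ of total degree at most $5$. Since $d^+_D(v)\le d^+_D(v)+d^-_D(v)\le 5$, this already gives $\delta^+(D)\le 5$, so Theorem~\ref{thm:main} applies with no further work.

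Your route differs in bounding the \emph{average} out-degree directly: $\sum_v d^+_D(v)=|A(D)|\le 3n-6<3n$, which yields the much stronger $\delta^+(D)\le 2$. This is worth noticing, because it shows the corollary needs neither Theorem~\ref{thm:main} (whose hard case is exactly $\delta^+(D)=5$) nor the case analysis of Theorem~\ref{thm:SMC-MinOutdegree}. Suppose a vertex $x$ of minimum out-degree were not a strong Seymour vertex. Then a minimal Hall-violating set $S\subseteq N^+_D(x)$ would satisfy $|S|\ge 3$ by Lemma~\ref{lemma:observation}~(3). That is impossible when $|N^+_D(x)|=\delta^+(D)\le 2$.

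So the paper's one-liner gains brevity by leaning on its main theorem. Your sharper count gains an essentially self-contained proof that rests only on Euler's formula and that elementary lemma.
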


The paper is organized as follows. We give proofs of Theorems~\ref{thm:SMC-MinOutdegree}~and~\ref{thm:main} in Section~\ref{sec:proof}, and then we provide in Section~\ref{sec:tour} some observations on tournaments.

\section{Proofs}\label{sec:proof}
 
\subsection{Preliminaries}

This subsection collects the definitions, known results, and simple observations that will be needed for the proofs.

Let $D$ be a digraph. For a subset $S\subset V(D)$, we denote by $D[S]$ the subgraph induced by $S$. 
For disjoint subsets $S$ and $T$ of $V(D)$, $S$ \textit{dominates} $T$ if there is an arc from every vertex of $S$ to every vertex of $T$. If $S$ or $T$ is a singleton, then we often drop the set notation, and so we say $u$ dominates $v$ if $S=\{u\}$ and $T=\{v\}$, $u$ dominates $T$ if $S=\{u\}$, or $S$ dominates $v$ if $T=\{v\}$.
If $(u,v)\in A(D)$ or $(v,u)\in A(D)$, then we say $uv$ is an edge of $D$ and $u$ is adjacent to $v$  in $D$.
The out-degree sequence $OS(D)=(d_1,\ldots,d_n)$ of $D$ is a non-increasing sequence of the out-degrees of the vertices of $D$, that is, 
$V(D)=\{v_1,\ldots,v_n\}$, $d^+_D(v_i)=d_i$ ($1\le i\le n$) and $d_1\ge d_2\ge \cdots \ge d_n$.

Presented below is the widely known Hall's Theorem.
For $X\subset V(D)$, $N_D^+(X)=\left(\cup_{x\in X} N^+_D(x)\right)\setminus X$.

\begin{theorem}[Hall's Theorem \cite{BM08}]\label{thm:Hall}
For an oriented graph $D$ with bipartition $(S,T)$, 
there exists a complete matching from $S$ to $T$ if and only if 
$|X| \le |N_D^+(X)\cap T|$ for every $X\subset S$.
\end{theorem}

\begin{lemma}\label{lemma:outdegre-sum}
For an oriented graph $D$ and two disjoint sets $S$ and $T$ of $V(D)$,
if for any $X\subseteq S$ we have $\sum_{x\in X} |N^+_{D}(x)\cap T| > |X|(|X|-1)$,
then there exists a complete matching from $S$ to $T$.
\end{lemma}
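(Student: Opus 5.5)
We argue by contraposition through Hall's Theorem (Theorem~\ref{thm:Hall}), applied to the bipartite oriented graph whose parts are $S$ and $T$ and whose arcs are the arcs of $D$ from $S$ to $T$. Assume no complete matching from $S$ to $T$ exists. Then some $X\subseteq S$ violates Hall's condition, so with $Y=N^+_D(X)\cap T$ we have $|Y|\le |X|-1$. We will show that this same $X$ violates the hypothesis, i.e. $\sum_{x\in X}|N^+_D(x)\cap T|\le |X|(|X|-1)$. Note that $X\neq\emptyset$, since the empty set always satisfies Hall's condition. (For $X=\emptyset$ the hypothesis reads $0>0$, so it should be understood for nonempty $X$; this has no effect on the argument.)

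The bound comes from counting arcs from $X$ into $T$. Every such arc ends in $Y$. Because $D$ has no loops or multiple arcs, each $x\in X$ sends at most one arc to each vertex of $Y$. Hence $|N^+_D(x)\cap T|=|N^+_D(x)\cap Y|\le |Y|\le |X|-1$ for every $x\in X$. Summing over $x\in X$ gives
\[
\sum_{x\in X}|N^+_D(x)\cap T|\le |X|\,|Y|\le |X|(|X|-1),
\]
which contradicts the assumed strict inequality. Therefore Hall's condition holds for every $X\subseteq S$, and Theorem~\ref{thm:Hall} yields a complete matching from $S$ to $T$.

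No step here is a real obstacle. The only point needing care is that the out-neighbourhoods of the vertices of $X$ inside $T$ all lie in the common set $Y$, and this is immediate from the definition of $Y$. Orientedness (the absence of $2$-cycles) is not even needed, since $S$ and $T$ are disjoint and only arcs from $S$ to $T$ are counted.
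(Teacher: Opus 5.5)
Your proof is correct and matches the paper's argument: the paper notes by averaging that some $x\in X$ has at least $|X|$ out-neighbours in $T$, so Hall's condition holds, and your contrapositive (every $x\in X$ has at most $|Y|\le|X|-1$ out-neighbours in $T$) is the same step written out. Your side remark is also right: the hypothesis has to be read for nonempty $X$, since $X=\emptyset$ gives $0>0$.
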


\begin{proof}
By an averaging argument, the condition implies that any subset $X$ of $S$ has at least $|X|$ out-neighbors in $T$. The statement then follows from Hall's theorem.
\end{proof}

The following is folklore.

\begin{lemma}\label{LePath4}
    Let $D$ be an oriented graph with $\delta^+(D)\geq 1$. The following statements hold:\\
    \indent {\rm(1)} $D$ contains a path of length $3$, or each component of $D$ is a triangle.\\
    \indent {\rm(2)} If $D$ has no two adjacent vertices both of out-degree $1$, then $D$ contains a path of length $4$.
\end{lemma}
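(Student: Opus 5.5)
Both parts rest on the same device: follow out-arcs to build a walk, and use the fact that in an oriented graph no arc is traversed backwards. We may assume $D$ is finite; otherwise a one-way infinite path would already give paths of every length.

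For (1), take a longest path $P=v_0v_1\cdots v_k$ in $D$. Since $\delta^+(D)\ge 1$, every vertex has an out-neighbor, and $D$ has no $2$-cycles, so $k\ge 2$. Indeed, $v_0$ has an out-neighbor $v_1$, and $v_1$ has an out-neighbor other than $v_0$.

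Suppose $k=2$, so $D$ has no path of length $3$. Let $w$ be any out-neighbor of $v_2$. If $w\notin\{v_0,v_1\}$, then $v_0v_1v_2w$ is a path of length $3$. Also $w\neq v_1$, because $D$ has no $2$-cycle. Hence $w=v_0$, and $v_0v_1v_2$ is a directed triangle.

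Since this holds for every path of length $2$, every vertex lies on a directed triangle: take $u$, an out-neighbor $u'$ of $u$, and an out-neighbor $u''\ne u$ of $u'$. It remains to show that each such triangle $T=abc$ is a whole component. Suppose some vertex $y\notin T$ is adjacent to $T$.
\begin{itemize}[leftmargin=*]
\item If $(a,y)\in A(D)$, then $cay$ is a path with tail $c$, and $y$ has an out-neighbor $w$. By the triangle argument applied to the path $cay$, we need $w=c$. Then $bcay$ is a path of length $3$, a contradiction.
\item If $(y,a)\in A(D)$, then $yabc$ is a path of length $3$, a contradiction.
\end{itemize}
The same holds with $b$ or $c$ in place of $a$. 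Therefore every component of $D$ is a triangle.

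For (2), assume $D$ has no path of length $4$ and no two adjacent vertices both of out-degree $1$. I would first try to show that every component is a triangle and then contradict the degree hypothesis. However, a triangle has all out-degrees equal to $1$ with adjacent vertices, so (1) applied directly would already settle the case where there is no path of length $3$. Thus we may assume a longest path is $P=v_0v_1v_2v_3$ with $k=3$.

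Every out-neighbor of $v_3$ lies in $\{v_0,v_1\}$, since $v_2$ is excluded by orientation. Similarly, the out-neighbors of $v_2$ lie in $\{v_0,v_3\}$; otherwise we could reroute to a longer path, which I check case by case. The key step is to exploit maximality over all length-$3$ paths: consider the path $v_1v_2v_3w$ and the alternatives through $v_0$.

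The case analysis runs as follows.
\begin{itemize}[leftmargin=*]
\item If $(v_3,v_1)\in A(D)$, then $v_1v_2v_3$ is a triangle. Then $v_0v_1v_2v_3$ forces the out-degrees of $v_2$ and $v_3$ to be small. If $v_2$ has the extra out-neighbor $v_0$, then $v_3v_1v_2v_0$ extends via an out-neighbor of $v_0$ other than $v_1$ and $v_2$, which must be $v_3$. That yields $v_2v_0v_3v_1$ …, and checking shows some two adjacent vertices have out-degree $1$.
\item If $(v_3,v_0)\in A(D)$, then $v_0v_1v_2v_3$ is a $4$-cycle. Each vertex's out-neighbors must stay within the cycle, because any exit yields a path of length $4$ starting from the appropriate rotation. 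The only possible chords are $(v_0,v_2)$ or $(v_2,v_0)$, and $(v_1,v_3)$ or $(v_3,v_1)$. At most one vertex per chord gains out-degree $2$, so two consecutive cycle vertices keep out-degree $1$, a contradiction.
\end{itemize}
The main obstacle is organizing this finite case check cleanly. I would do it by taking $P$ to be a longest path and, among those, one whose end $v_3$ has an out-neighbor as far back as possible.
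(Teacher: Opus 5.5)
Part (1) is correct and is essentially the paper's argument: a longest path of length $2$ must close into a triangle, and any arc between a triangle and an outside vertex produces a path of length $3$. In part (2), two steps are also fine:
\begin{itemize}
\item the reduction to a longest path $P=v_0v_1v_2v_3$;
\item your case $(v_3,v_0)\in A(D)$, which corresponds to the paper's step showing that the end of a longest path does not dominate its start.
\end{itemize}

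The gap is the remaining case $N^+_D(v_3)=\{v_1\}$, which is where all the work is. Your claim that $N^+_D(v_2)\subseteq\{v_0,v_3\}$ because ``otherwise we could reroute to a longer path'' does not follow from maximality. Take the arcs $(v_0,v_1),(v_1,v_2),(v_2,v_3),(v_3,v_1),(v_2,u),(u,v_1)$. Every vertex has an out-neighbor, $v_0v_1v_2v_3$ is a longest path, $v_2$ has the out-neighbor $u\notin V(P)$, and there is no path of length $4$. So that claim could only come from the degree hypothesis, and proving it is essentially the lemma itself.

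After that, your analysis of $(v_3,v_1)\in A(D)$ breaks off with ``\dots, and checking shows''. It also uses the hypothesis in the wrong direction. You say the out-degrees of $v_2,v_3$ are forced to be small. What matters is that $d^+_D(v_3)=1$ forces every vertex adjacent to $v_3$ to have out-degree at least $2$.

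The missing idea is to rotate the path through an extra out-neighbor of $v_1$:
\begin{enumerate}
\item Since $d^+_D(v_3)=1$ and $v_1$ is adjacent to $v_3$, the hypothesis gives $d^+_D(v_1)\ge 2$.
\item An out-neighbor $u\neq v_2$ of $v_1$ is neither $v_0$ nor $v_3$, since both dominate $v_1$. So $u\notin V(P)$, and $P'=v_2v_3v_1u$ is again a longest path.
\item Applying the endpoint analysis to $P'$ gives $N^+_D(u)\subseteq\{v_2,v_3\}$.
\item $(u,v_2)\notin A(D)$, since otherwise $v_0v_1uv_2v_3$ is a path of length $4$.
\item Hence $N^+_D(u)=\{v_3\}$, so $u$ and $v_3$ are adjacent vertices of out-degree $1$, a contradiction.
\end{enumerate}
This is exactly how the paper finishes. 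Your extremal choice of $P$ (preferring an end that dominates the start) would make step 4 automatic. It does not replace the rotation itself, which is absent from your proposal, so as written part (2) is not proved.
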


\begin{proof}
    \noindent  (1) By the fact that $\delta^+(D)\geq 1$, any maximal path $P$ in a component of $D$ has length at least 2. We are done when $|E(P)|\geq 3$, so assume that $|E(P)|=2$ and say $P=v_1v_2v_3$. By $d^+_D(v_3)\geq 1$, $(v_3,v_1)\in A(D)$. By the maximality of $|E(P)|$, any vertex outside $P$ is not adjacent to $P$, implying the desired assertion.
    
    \noindent  (2) 
    Let $P$ be a longest path of $D$, say $P=v_1v_2\cdots v_{t-1}v_t$. Suppose that $D$ contains no path of length $4$, i.e., $t\leq 4$. 
    
    We claim that $(v_t,v_1)\notin A(D)$. Suppose that $(v_t,v_1)\in A(D)$. Then $C:=P\cup\{(v_t,v_1)\}$ is a cycle. We have that $V(D)=V(C)$; for otherwise together with any edge from $C$ to $V(D)\backslash V(C)$ we can get a path longer than $P$. It follows that $|V(D)|\in\{3,4\}$. But now $C$ has two adjacent vertices both of out-degree $1$, a contradiction. Thus $(v_t,v_1)\not\in A(D)$.

    Since $(v_t,v_{t-1})\notin A(D)$ and $\delta^+(D)\geq 1$, $N_D^+(v_t)\subset V(P)\backslash\{v_1,v_{t-1},v_t\}$. Then we have that $t=4$ and $N_D^+(v_t)=\{v_2\}$. In particular, we see that $d_D^+(v_t)=1$, following which we have that $d_D^+(v_2)\geq 2$. Let $u\in N_D^+(v_2)\backslash\{v_3\}$. Note that $u\notin V(P)$. Now $P'=v_3v_4v_2u$ is a longest path of $D$ as well. By a similar analysis above, we have $d_D^+(u)=1$ and $N_D^+(u)=\{v_4\}$, which contradicts the fact that $D$ has no two adjacent vertices both of out-degree $1$.
\end{proof}

\begin{lemma}[Daamouch \cite{D20}]\label{lem:at}
Let $D$ be a $k$-anti-transitive digraph. For any vertex 
$u$, if there exists a path of length 
$d$ in $D[N_D^+(u)]$, then $d\le k-2$.
\end{lemma}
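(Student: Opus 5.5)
The claim is Daamouch's lemma: if $D$ is $k$-anti-transitive and $D[N_D^+(u)]$ contains a path of length $d$, then $d\le k-2$. I would argue by contradiction. Suppose $D[N_D^+(u)]$ contains a path $P=y_0y_1\cdots y_d$ with $d\ge k-1$. The idea is to prepend $u$ to an initial segment of $P$ so as to obtain a path of length exactly $k$ whose first vertex is $u$ and whose last vertex lies in $N_D^+(u)$. The arc from $u$ to that last vertex then violates $k$-anti-transitivity.

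Concretely, consider the walk $Q=u\,y_0y_1\cdots y_{k-1}$. This requires $d\ge k-1$, so that $y_{k-1}$ exists on $P$.
\begin{itemize}
\item $Q$ is a genuine path: the $y_i$ are distinct because $P$ is a path, and $u\notin N_D^+(u)$ because there are no loops.
\item $Q$ uses the arc $(u,y_0)$, which exists since $y_0\in N_D^+(u)$.
\item $Q$ then uses the $k-1$ arcs $(y_{i-1},y_i)$ of $P$, for $1\le i\le k-1$.
\end{itemize}
Hence $Q$ has length exactly $k$, with first vertex $v_0=u$ and last vertex $v_k=y_{k-1}\in N_D^+(u)$. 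Therefore $(v_0,v_k)=(u,y_{k-1})\in A(D)$, contradicting the definition of $k$-anti-transitivity. So $d\le k-2$.

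There is no real obstacle. The only points to check are the small cases and the indexing.
\begin{itemize}
\item For $k=1$: $1$-anti-transitivity means $D$ has no arcs at all. Then $N_D^+(u)=\emptyset$, so no path of any length $d\ge 0$ exists there, and the claim $d\le -1$ holds vacuously.
\item For $k\ge 2$: the condition $d\ge k-1\ge 1$ guarantees that $P$ has enough vertices for $Q$ to be defined.
\end{itemize}
The one thing to get right is the count: $u$ together with $k$ vertices of $P$ gives $k$ arcs.
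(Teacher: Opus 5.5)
Your proof is correct: prepending $u$ to $y_0,\dots,y_{k-1}$ gives a genuine path of length $k$ ending in $N_D^+(u)$, and the arc $(u,y_{k-1})$ then contradicts $k$-anti-transitivity. The paper gives no proof of this lemma and only cites Daamouch, so there is nothing to compare against; your separate treatment of $k=1$ is harmless but unnecessary, since the same construction $Q=uy_0$ (available whenever $d\ge 0$) already covers that case.
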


The following lemma shows some observations on a vertex that is not a strong Seymour vertex. 
For a digraph $D$,  let $N_S^+(y)=N_D^+(y)\cap S$
and $d_S^+(y)=|N_S^+(y)|$, 
where  $S\subset V(D)$ and $y\in V(D)$. 

\begin{lemma}\label{lemma:observation}
Let $D$ be an oriented graph, and $x$ be a vertex of $D$ with minimum out-degree $\delta:=\delta^+(D)$. Suppose that there is $S\subset N_D^+(x)$ such that $|S|>|N^+_D(S)\setminus N_D^+(x)|$ and 
$|S'|\le |N^+_D(S')\setminus N_D^+(x)|$ for every proper subset $S'$ of $S$. Set $S^+=N_D^+(S)\backslash N_D^+(x)$ and $T=N_D^+(x)\backslash S$. (See Figure~\ref{fig1}.) The following hold. \\
\indent {\rm(1)} For every $y\in N_D^+(x)$, $N_D^+(y)\setminus N_D^+(x)\neq\emptyset$.\\
\indent {\rm(2)} $|S^+|=|S|-1$.\\
\indent {\rm(3)} $\delta^+(D[S])\geq 1$ and $|S|\geq 3$.\\
\indent {\rm(4)} If $d_S^+(y)=1$ for $y\in S$, then $d^+_D(y)=\delta$ and $y$ dominates $S^+\cup T$.
\end{lemma}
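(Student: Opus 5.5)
We prove the four items in order; each follows from the minimality of $S$ together with $d^+_D(y)\ge \delta = d^+_D(x)$.

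\textbf{Item (1).} Take $y\in N_D^+(x)$ and count its out-neighbours. Since $D$ is oriented, $x\notin N_D^+(y)$, and also $y\notin N_D^+(y)$. So $N_D^+(y)\cap N_D^+(x)\subseteq N_D^+(x)\setminus\{y\}$, a set of size $\delta-1$. On the other hand $d^+_D(y)\ge\delta$. Hence $y$ has an out-neighbour outside $N_D^+(x)$. This neighbour is not $x$, so it lies in $N^{++}_D(x)$.

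\textbf{Item (2).} Pick any $y\in S$. If $S=\{y\}$, then $S^+\neq\emptyset$ by (1), contradicting $|S|>|S^+|$; so $S'=S\setminus\{y\}$ is a nonempty proper subset. Minimality gives $|S|-1\le |N^+_D(S')\setminus N_D^+(x)|\le |S^+|$. Combined with $|S^+|<|S|$, this forces $|S^+|=|S|-1$.

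\textbf{Item (4).} We do this before (3), since it uses the same count. Let $y\in S$. Its out-neighbourhood is partitioned into $N^+_S(y)$, $N^+_T(y)\subseteq T\setminus\{y\}$, and the part outside $N_D^+(x)\cup\{x\}$, which lies in $S^+$. Note $x\notin N_D^+(y)$ because $D$ is oriented. Also $|T|=\delta-|S|$ and $|S^+|=|S|-1$. Therefore
\[
\delta\le d^+_D(y)\le d^+_S(y)+(\delta-|S|)+(|S|-1)=d^+_S(y)+\delta-1 .
\]
This gives $d^+_S(y)\ge 1$, which is the first half of (3). If moreover $d^+_S(y)=1$, all inequalities are equalities. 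So $d^+_D(y)=\delta$, and $y$ dominates all of $T$ and all of $S^+$, which is (4).

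\textbf{Item (3), the bound $|S|\ge 3$.} Since $\delta^+(D[S])\ge1$ and $D[S]$ is oriented, $D[S]$ contains a cycle. A cycle in an oriented graph has length at least $3$, so $|S|\ge 3$.

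The only delicate step is (2): checking that a proper nonempty subset $S'$ exists and that $N^+_D(S')\setminus N^+_D(x)\subseteq S^+$. The rest is degree counting, where the key points are that $x\notin N_D^+(y)$ and $y\notin N_D^+(y)$.
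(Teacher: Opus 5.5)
Your proposal is correct and follows the paper's proof essentially step for step. For (2) you use the same minimality argument with a subset $S'$ of size $|S|-1$, and for (3) and (4) you use the same count $\delta\le d^+_D(y)\le |T|+d^+_S(y)+|S^+|=d^+_S(y)+\delta-1$, whose equality case gives (4). Your additions, namely ruling out $|S|=1$ and deriving $|S|\ge 3$ from a cycle of length at least $3$ in $D[S]$, only make explicit what the paper leaves implicit.
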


\begin{figure}[ht]
\begin{center}
\begin{tikzpicture}
\draw[fill=black] (0,0) {coordinate (x)} circle (0.08);
\node at (0,-0.3) {$x$};
\node at (0.55,0){$\rightrightarrows$};
\node at (3.25,0.4){$\rightarrow$};
\draw [densely dotted] (1.0,-1.5) rectangle (3,-0.5);
\draw [densely dotted] (1.0,-0.5) rectangle (3,1.5);
\draw [densely dotted] (3,1.5) -- (3.5,1.5);
\draw [densely dotted] (3,-0.5) -- (3.5,0);
\node at (2,0.7) {$S$};
\node at (2,-1) {$T$};
\node at (2,-2) {$N_D^+(x)$};
\draw [densely dotted] (3.5,0) rectangle (5.5,1.5);
\node at (4.5,0.7) {$S^+$};
\draw [densely dotted] (3.5,0) rectangle (5.5,-1.5);
%\node at (4.5,-0.6) { };
\node at (4.5,-2) {$N_D^{++}(x)$};
\end{tikzpicture}
\end{center}
\caption{An illustration of the sets in Lemma~\ref{lemma:observation}.}\label{fig1}
\end{figure}

\begin{proof}
\noindent (1) It follows, since $|N_D^+(y)\cap N_D^+(x)|\le \delta-1$ and $|N_D^+(y)|\ge \delta$.

\noindent (2) For any proper subset $S'$ of $S$ with size $|S|-1$, we have 
\[ |S|-1=|S'|\le |N^+_D(S')\setminus N_D^+(x)|\le |S^+|\le|S|-1,\]
and so $|S^+|=|S|-1$.

\noindent (3) For each $y\in S$, $N_D^+(y)$ is a subset of $T\cup N_S^+(y)\cup S^+$. By (2), 
\[ \delta\le d_D^+(y)\le|T|+d_S^+(y)+|S^+|=|T|+d_S^+(y)+|S|-1=d_S^+(y)+\delta-1,\]
and so $d^+_S(y)\ge 1$. It follows that $\delta^+(D[S])\geq 1$ and $|S|\geq 3$.

\noindent (4) If $d^+_S(y)=1$ for $y\in S$, then the equality of the above holds, which implies that $d_D^+(y)=\delta$ and $S^+\cup T$ is a subset of $N_D^+(y)$.
\end{proof}

\subsection{Proof of Theorem~\ref{thm:SMC-MinOutdegree}}

To show Theorem~\ref{thm:SMC-MinOutdegree}, suppose to the contrary that there is an oriented graph $D$ with $\delta^+(D)\le 4$ such that each minimum out-degree vertex is not a strong Seymour vertex. 
Let $\delta:=\delta^+(D)$. Take a vertex $x$ with  minimum out-degree. Since $x$ is not a strong Seymour vertex, there is a minimal subset $S\subset N_D^+(x)$ such that $|S|>|N^+_D(S)\setminus N_D^+(x)|$. Set $S^+:=N^+_D(S)\setminus N_D^+(x)$ and $T:=N_D^+(x)\setminus S$. Note that $|S|+|T|=\delta$.
By Lemma~\ref{lemma:observation}~(3) and the fact that $\delta\le 4$, it follows that $3\le |S|\le 4$, and thus $\delta\ge 3$.

Suppose first that $|S|=3$. 
By Lemma~\ref{lemma:observation}~(2) and~(3), 
$|S^+|=2$ and $D[S]$ is a directed triangle, say $v_1v_2v_3v_1$. By Lemma~\ref{lemma:observation}~(4), each vertex in $S$ has out-degree $\delta$ and dominates $S^+\cup T$. Note that each vertex in $S^+$ has at least one out-neighbor in $V(D)\setminus(S\cup S^+)$, and at least one vertex in $S^+$ has at least two out-neighbors in $V(D)\setminus(S\cup S^+)$. By Theorem~\ref{thm:Hall}, there is a complete matching $M$ from $S^+$ to $V(D)\setminus(S\cup S^+)$. If $\delta=3$, then $T=\emptyset$ and it follows that $v_1$ is a strong Seymour vertex of out-degree $\delta$, since $M\cup\{(v_2,v_3)\}$ is a complete matching from $N_D^+(v_1)$ to $N_D^{++}(v_1)$, a contradiction. Thus $\delta=4$. Then $|T|=1$ and for the vertex $u\in T$, $S$ dominates $u$. Therefore, $u$ has at least two out-neighbors in $V(D)\setminus(S\cup S^+)$. Moreover, since $S$ dominates $\{u\}\cup S^+$, each vertex in $S^+$ has at least two out-neighbors in $V(D)\setminus(S\cup T\cup S^+)$ and at least one vertex of $S^+$ has at least three out-neighbors in
$V(D)\setminus(S\cup T\cup S^+)$. By Theorem~\ref{thm:Hall}, we can find a complete matching $M$ from $T\cup S^+$ to $V(D)\setminus(S\cup T\cup S^+)$. It follows that $v_1$ is a strong Seymour vertex of out-degree $\delta$, since $M\cup\{(v_2,v_3)\}$ is a complete matching from  $N^+_D(v_1)$ to $N^{++}_D(v_1)$, a contradiction.

Suppose now that $|S|=4$, following which we have $\delta=4$, and by Lemma~\ref{lemma:observation}~(2), $|S^+|=3$. By Lemma~\ref{lemma:observation}~(3), we see that there are at least two vertices $v_1,v_2\in S$ such that $d_S^+(v_i)=1$, $i=1,2$. 
By Lemma~\ref{lemma:observation}~(4), $d_D^+(v_1)=d_D^+(v_2)=4$ and $\{v_1,v_2\}$ dominates $S^+$. 
Since every vertex in $S$ has at least four out-neighbors in $D$ and $D[S]$ has at most $6$ arcs, there are at least $10$ arcs from $S$ to $S^+$. Thus, there are at most $5$ arcs from $S^+$ to $S\cup S^+$.

Suppose that some vertex $z\in S^+$ has no out-neighbor in $V(D)\setminus (S\cup S^+)$. Then the out-neighbors of $z$ are the vertices in $(S^+\backslash\{z\})\cup(S\backslash\{v_1,v_2\})$. This also implies that $D[S]$ is a tournament and so the out-degree sequence of $D[S]$ is $(2,2,1,1)$. Recall that $\{v_1,v_2\}$ dominates $S^+$, each vertex in $S^+\backslash\{z\}$ has at least one out-neighbor in $V(D)\backslash(S\cup S^+)$, and one of them has at least two out-neighbors in $V(D)\backslash(S\cup S^+)$. One can check that $z$ is a strong Seymour vertex of out-degree $\delta$, since there are a complete matching from $S\backslash\{v_1,v_2\}$ to $\{v_1,v_2\}$ and a complete matching from $S^+\setminus\{z\}$ to $V(D)\setminus (S\cup S^+)$, a contradiction.

Suppose that each vertex of $S^+$ has at least one out-neighbor in $V(D)\backslash(S\cup S^+)$. Then at least two vertices of $S^+$ have at least two out-neighbors in $V(D)\backslash(S\cup S^+)$, and
at least one vertex of $S^+$ has at least three out-neighbors in $V(D)\backslash(S\cup S^+)$. By Theorem~\ref{thm:Hall}, there is a complete matching $M$ from $S^+$ to $V(D)\backslash(S\cup S^+)$, and so $M\cup\{(u_2,u_3)\}$ is a complete matching from $N^+_D(v_1)$ to $N^{++}_D(v_1)$, 
where $u_2$ is the out-neighbor of $v_1$ in $D[S]$ and $u_3$ is an out-neighbor of $u_2$ in $D[S]$.
Thus, $v_1$ is a strong Seymour vertex of out-degree $\delta$, a contradiction.

\subsection{Proof of Theorem~\ref{thm:main}}

To show Theorem~\ref{thm:main}, suppose to the contrary that there is an oriented graph $D$ without a strong Seymour vertex such that $\delta^+(D)\le 5$ or $at(D)\le 5$. We choose such a digraph $D$ with minimum order, and then with minimum arc number. By Theorem~\ref{thm:SMC-MinOutdegree}, either $\delta^+(D)=5$, or $\delta^+(D)\geq 6$ and $at(D)\le 5$. 

\begin{claim}\label{Clxyz}
    Let $x,y,z\in V(D)$ be such that $(x,y),(y,z)\in A(D)$ and $(x,z)\notin A(D)$. There is at least one arc from $N_D^+(x)\backslash\{y\}$ to $\{y,z\}$.
\end{claim}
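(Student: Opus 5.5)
We argue by contradiction using the minimality of $D$ with respect to its arc number. Suppose that there is no arc from $A:=N_D^+(x)\setminus\{y\}$ to $\{y,z\}$. We delete the arc $(x,y)$, setting $D':=D-(x,y)$, and show that $D'$ still satisfies the hypothesis of Theorem~\ref{thm:main}. It has the same order and fewer arcs, so it is a smaller candidate. Deleting an arc cannot increase any out-degree, so if $\delta^+(D)\le 5$ then $\delta^+(D')\le 5$. Deleting an arc cannot create a path $v_0\cdots v_k$ together with an arc $(v_0,v_k)$, so if $at(D)\le 5$ then $at(D')\le 5$. By the minimality of $D$, the oriented graph $D'$ therefore has a strong Seymour vertex $v$.

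\textbf{Case $v\neq x$.} Then $N^+_{D'}(v)=N^+_D(v)$, since only the out-neighborhood of $x$ changed. Moreover $N^{++}_{D'}(v)\subseteq N^{++}_D(v)$: every path $v\,w\,u$ in $D'$ is also a path in $D$, and the excluded set $N^+(v)\cup\{v\}$ is the same in both graphs. Every arc of $D'$ is an arc of $D$. Hence a complete matching from $N^+_{D'}(v)$ to $N^{++}_{D'}(v)$ is also a complete matching from $N^+_D(v)$ to $N^{++}_D(v)$. So $v$ is a strong Seymour vertex of $D$, a contradiction.

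\textbf{Case $v=x$.} Here $N^+_{D'}(x)=A$, and there is a complete matching $M$ from $A$ to $N^{++}_{D'}(x)$. We add the arc $(y,z)$ to $M$ and check three points.
\begin{enumerate}[(i)]
\item $N^{++}_{D'}(x)\subseteq N^{++}_D(x)$. A vertex of $N^{++}_{D'}(x)$ is reached from $x$ through $A$, so it is reached the same way in $D$. It is not $x$, and it is not in $A$. It is also not $y$, because $y$ would have to be an out-neighbor of some vertex of $A$, which we excluded.
\item $z\in N^{++}_D(x)$. We have $z\ne x$, since $(y,x)$ would form a $2$-cycle with $(x,y)$. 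We have $z\notin N^+_D(x)$ by hypothesis. And $z$ is reached via $y$.
\item $z\notin N^{++}_{D'}(x)$, since no vertex of $A$ has an arc to $z$. Hence $z$ is not covered by $M$.
\end{enumerate}
Therefore $M\cup\{(y,z)\}$ is a complete matching from $N^+_D(x)=A\cup\{y\}$ to $N^{++}_D(x)$. So $x$ is a strong Seymour vertex of $D$, again a contradiction.

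The step that needs the most care is the second case, specifically point (iii). This is exactly where the assumption that nothing in $N^+_D(x)\setminus\{y\}$ points to $y$ or to $z$ is used: it guarantees that $y$ does not appear in $N^{++}_{D'}(x)$ and that $z$ is still free to be matched with $y$.
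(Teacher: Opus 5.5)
Your proof is correct and follows the paper's argument: delete the arc $(x,y)$, use minimality of the arc number to get a strong Seymour vertex of $D'$, transfer it back to $D$ when it is not $x$, and extend the matching by $(y,z)$ when it is $x$. Your version is slightly more careful than the paper's in two places: you check that $D'$ still satisfies $\delta^+\le 5$ or $at\le 5$, and you state the inclusion $N^{++}_{D'}(v)\subseteq N^{++}_D(v)$ in the correct direction, whereas the paper writes it reversed.
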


\begin{proof}
    Suppose that there is no arc from $N_D^+(x)\backslash\{y\}$ to $\{y,z\}$. Let $D'$ be the digraph obtained from $D$ by removing the arc $(x,y)$. By the choice of $D$, $D'$ has a strong Seymour vertex. Suppose that $u$ is a strong Seymour vertex of $D'$ with $u\neq x$, say $M$ is a complete matching from $N_{D'}^+(u)$ to $N_{D'}^{++}(u)$. Note that $N_D^+(u)=N_{D'}^+(u)$ and $N_D^{++}(u)\subseteq N_{D'}^{++}(u)$. It follows that $u$ is a strong Seymour vertex of $D$ as well, a contradiction. So we conclude that $x$ is a strong Seymour vertex of $D'$, and let $M$ be a complete matching from $N_{D'}^+(x)$ to $N_{D'}^{++}(x)$. Recall that there are no arcs from $N_D^+(x)\backslash\{y\}$ to $\{y,z\}$. This implies $y,z\notin N_{D'}^{++}(x)$. Now $M\cup\{(y,z)\}$ is a complete matching from $N_D^+(x)$ to $N_D^{++}(x)$, implying that $x$ is a strong Seymour vertex of $D$, a contradiction.
\end{proof}
 
For simplicity, we let 
$$\alpha(x):=\max\{d_{N_D^+(x)}^+(y): d_D^+(y) \text{ is minimum among the vertices in }N_D^+(x)\}$$ for any vertex $x\in V(D)$. Note that $\alpha(x)\le d^+_D(x)-1$ for every vertex $x\in V(D)$.

For any vertex $x\in V(D)$, by the fact that $x$ is not a strong Seymour vertex of $D$, and by Theorem~\ref{thm:Hall}, we see that there is a set $S\subseteq N_D^+(x)$ such that $|S|>|N^+_D(S)\setminus N_D^+(x)|$, and we denote by $\beta(x)$ the size of a smallest such set $S$. 

Let $\delta:=\delta^+(D)$. We choose a vertex $x_0$ in $D$ with $d_D^+(x_0)=\delta$ such that:\\
\indent (1) $\alpha (x_0)$ is maximum; and subject to this,\\
\indent (2) $\beta (x_0)$ is minimum; and subject to this,\\
\indent (3) The out-degree sequence of $D[N_D^+(x_0)]$ is lexicographically maximum.

Let $S\subseteq N_D^+(x_0)$ be such that $|S|>|N^+_D(S)\setminus N_D^+(x_0)|$ with $|S|=\beta(x_0)$.  For simplicity, we set $R=N_D^+(x_0)$, $T=R\setminus S$ and $S^+=N_D^+(S)\backslash R$. Let $y_0$ be a vertex in $R$ with $d_D^+(y_0)=\min\{d_D^+(y): y\in R\}$ and $d_R^+(y_0)=\alpha(x_0)$.
Note that $|R|=\delta$. By Lemma~\ref{lemma:observation}~(2), $|S^+|=|S|-1$ and $|S^+\cup T|=\delta-1$.
For a positive integer $\ell$, let $S_\ell=\{v\in S: d^+_S(v)=\ell\}$.
Note that $\delta^+(D[S])\geq 1$ by Lemma~\ref{lemma:observation}~(3).

\begin{claim}\label{ClAdjacentinS1}
  Suppose that there are two vertices in $S_1$ that are adjacent in $D$. 
  Then $|S|=4$, $y_0\in S$, $y_0$ dominates $S\backslash\{y_0\}$ and $D[S\backslash\{y_0\}]$ is a triangle.
\end{claim}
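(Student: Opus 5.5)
Write $u,v\in S_1$ with $(u,v)\in A(D)$. The plan is to move the base vertex from $x_0$ to $u$, use the maximality of $\alpha(x_0)$ to pin down $y_0$, and then use the minimality of $\beta(x_0)$ and the lexicographic rule (3) to pin down $D[S]$.

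First I collect what Lemma~\ref{lemma:observation}~(4) gives. Both $u$ and $v$ have out-degree $\delta$ and dominate $S^+\cup T$. Since $u\in S_1$, its unique out-neighbour in $S$ is $v$, so $N_D^+(u)=\{v\}\cup S^+\cup T$. Inside this set, $v$ dominates $S^+\cup T$, so $d^+_{N_D^+(u)}(v)=\delta-1$. Every vertex of $N_D^+(u)$ has out-degree at least $\delta=d^+_D(v)$, so $v$ has minimum out-degree in $N_D^+(u)$. Since $d^+_D(u)=\delta$, this gives $\alpha(u)=\delta-1$, which is the largest possible value. Hence condition (1) in the choice of $x_0$ forces $\alpha(x_0)=\delta-1$, i.e. $y_0$ dominates $R\setminus\{y_0\}$.

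Next I locate $y_0$. If $y_0\in T$, then $y_0$ would dominate $u$, while $u$ dominates $T\ni y_0$; this contradicts $D$ being oriented. So $y_0\in S$, and $y_0$ dominates $(S\setminus\{y_0\})\cup T$. In particular $u,v\neq y_0$, since $d^+_S(y_0)=|S|-1\ge 2$. Counting $d^+_D(y_0)\ge\delta=|S|+|T|$ shows that $y_0$ has at least one out-neighbour in $S^+$. Also, every vertex of $S\setminus\{y_0\}$ is dominated by $y_0$, so by Lemma~\ref{lemma:observation}~(3) each of them has an out-neighbour inside $S\setminus\{y_0\}$. Thus $D':=D[S\setminus\{y_0\}]$ satisfies $\delta^+(D')\ge1$ and contains the arc $(u,v)$ between two vertices of out-degree $1$ in $D'$. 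Let $w$ be the unique out-neighbour of $v$ in $S$; then $w\in S\setminus\{y_0,u\}$, and $N_D^+(v)=\{w\}\cup S^+\cup T$.

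The remaining task is to show $D'$ is exactly the triangle $uvwu$. This gives $|S|=4$, because $D'$ is the whole of $S\setminus\{y_0\}$. I would do this by comparing $x_0$ with the vertex $u$. The vertex $u$ has $d^+_D(u)=\delta$ and $\alpha(u)=\alpha(x_0)$, so by rule (2) we need $\beta(u)\ge\beta(x_0)=|S|$. I would exhibit a small Hall-violating set in $N_D^+(u)$. The natural candidate is $X=\{v\}\cup S^+$, or $\{v\}$ together with the part of $S^+$ whose out-neighbours stay inside $N_D^+(u)\cup\{w\}$. Its out-neighbourhood outside $N_D^+(u)$ is controlled, since $v$ contributes only $w$ and the vertices of $S^+$ send many arcs back into $S$. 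The aim is $\beta(u)<|S|$ unless $S\setminus\{y_0\}=\{u,v,w\}$ with $(w,u)\in A(D)$. If the $\beta$ comparison is inconclusive, I would fall back on rule (3). I would compare the out-degree sequence of $D[N_D^+(u)]$, which contains $v$ with out-degree $\delta-1$ and in which $S^+\cup T$ is dominated by $v$, with that of $D[R]$, and argue that a longer path $u\,v\,w\cdots$ in $D'$, as in the proof of Lemma~\ref{LePath4}~(1), makes the sequence for $u$ lexicographically larger. I expect this last step, ruling out $|S|\ge5$ and a non-triangle $D'$ by the $\beta$ or lexicographic comparison, to be the main obstacle. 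The first two steps are essentially forced by Lemma~\ref{lemma:observation}.
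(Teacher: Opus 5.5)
Your first two steps are correct and match the paper. Lemma~\ref{lemma:observation}~(4) gives $\alpha(u)=\delta-1$, hence $\alpha(x_0)=\delta-1$, so $y_0\in S$ dominates $R\setminus\{y_0\}$ and $D'=D[S\setminus\{y_0\}]$ has minimum out-degree at least $1$. The gap is the last step, which is the real content of the claim, and neither route you sketch can close it.

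\begin{itemize}
\item The set $\{v\}\cup S^+$ has exactly $1+(|S|-1)=|S|$ elements. Even if it violated Hall's condition, it would only give $\beta(u)\le|S|$, which is compatible with rule~(2). You need a violator of size strictly less than $|S|$.
\item Your other candidate reduces to $\{v\}$. Each $z\in S^+$ has at most $|S^+|-1+|T|=\delta-2$ out-neighbours in $N^+_D(u)$, since it cannot point to $v$. So it has at least two out-neighbours outside $N^+_D(u)$, and no vertex of $S^+$ keeps its out-neighbours inside $N^+_D(u)\cup\{w\}$.
\item Nothing forces $S^+$ to send ``many arcs back into $S$''; such arcs cannot even reach $u$ or $v$.
\item The lexicographic fallback has no argument behind it. $OS(D[R])$ and $OS(D[N^+_D(u)])$ both begin with $\delta-1$, and you know nothing about the remaining entries.
\end{itemize}

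Above all, you never use the hypothesis $\delta^+(D)\le 5$ or $at(D)\le 5$. Without it there is no reason for $|S|$ to equal $4$.

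The missing idea is to bring in that hypothesis through Lemmas~\ref{LePath4}~(1) and~\ref{lem:at}, as the paper does. If $D'$ contains a path of length $3$, prefixing $y_0$ gives a path of length $4$ in $D[N^+_D(x_0)]$, which is impossible when $at(D)\le5$. Hence $\delta=5$, and $S=R$ has $5$ vertices. Rule~(2) is then used in the direction opposite to your plan. Since $\alpha(u)=4=\alpha(x_0)$, it forces $\beta(u)=5$, so $N^+_D(u)=\{v\}\cup S^+$ is itself a minimal Hall violator and Lemma~\ref{lemma:observation} applies at $u$ with $T=\emptyset$. In the $5$-vertex oriented graph $D[N^+_D(u)]$, $v$ has out-degree $4$. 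The other four vertices each have out-degree at least $1$ there and share at most $6$ arcs, so two of them, $z,z'\in S^+$, have out-degree exactly $1$. By Lemma~\ref{lemma:observation}~(4) at $u$, both dominate $w\in N^{++}_D(u)$. Then $N^+_D(w)\subseteq(S\setminus\{y_0,v,w\})\cup(S^+\setminus\{z,z'\})$, so $d^+_D(w)\le 4<\delta$, a contradiction.

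If $D'$ has no path of length $3$, every component of $D'$ is a triangle. A single triangle is exactly the claimed conclusion. The case of several triangles is passed over in the paper's wording, and you must handle it too. It can be excluded by Claim~\ref{Clxyz} applied with $x=u$, $y=v$, $z=w$, noting that $(w,u)\in A(D)$. All of $S\setminus\{y_0\}$ then lies in $S_1$, so $v$ and $w$ dominate $S^+\cup T=N^+_D(u)\setminus\{v\}$, and no arc enters $\{v,w\}$ from that set.
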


\begin{proof}
    Let $v_1,v_2\in S_1$ with $(v_1,v_2)\in A(D)$. By Lemma~\ref{lemma:observation}~(4), $d_D^+(v_1)=d_D^+(v_2)=\delta$ and $\{v_1,v_2\}$ dominates $T\cup S^+$. Clearly, by the existence of $v_2$, $\alpha(v_1)=\delta-1$, following which we have $\alpha(x_0)=\delta-1$ by the choice of $x_0$. Thus, $y_0$ has exactly one out-neighbor in $N^{++}_D(x_0)$. Recall that $|S|\geq 3$, $|S^+|\geq 2$ and $\{v_1,v_2\}$ dominates $S^+$. This implies that $y_0,v_1,v_2$ are three distinct vertices. Since $\{v_1,v_2\}$ dominates $T$ and $y_0$ dominates $R\backslash\{y_0\}$, we have that $y_0\notin T$, i.e., $y_0\in S$. Set $S'=S\backslash\{y_0\}$. 
    
   By Lemma~\ref{lemma:observation}~(3), $\delta^+(D[S])\geq 1$. 
   Since $y_0$ dominates $S'$, we have that $\delta^+(D[S'])\geq 1$. By Lemma~\ref{LePath4}, $D[S']$ is either a triangle or contains a path of length $3$. If $D[S']$ is a triangle, then $|S|=4$ and $y_0$ dominates $S'$, as desired. 
{Suppose to the contrary that $D[S']$ contains a path of length $3$. Together with an arc from $y_0$ to $S'$ we get a path of length $4$ in $D[S]$. 
Since $x_0$ dominates all vertices of this path, we have $at(D)\neq 5$ by Lemma~\ref{lem:at}. By our assumption, $\delta=5$.  Then $\beta(x_0)=|S|=5$, $S=R$, $d_D^+(y_0)=5$ and $\alpha(v_1)=4$. By the choice of $x_0$, we have that $\beta(v_1)=5$. That is, $|N_D^{++}(v_1)|<|N_D^+(v_1)|$.
Note that $OS(D[N_D^+(v_1)])=(4,b_2,b_3,b_4,b_5)$ and for each $j$, $b_j\ge 1$ by Lemma~\ref{lemma:observation}~(3). Since $b_2+b_3+b_4+b_5\le 6$, $b_4\le 1$.  
Thus, there are at least two vertices $u,u'\in  N_D^+(v_1)$ such that $d_{N_D^+(v_1)}(u)=d_{N_D^+(v_1)}(u')=1$. Note that $u,u'\neq v_2$ and so $u,u'\in S^+$. Let $v_3\in S$ be the out-neighbor of $v_2$ in $D[S]$.  By Lemma~\ref{lemma:observation}~(4), each of $u$ and $u'$ dominates $v_3$. Then $N_D^+(v_3)$ is a subset of $(S\setminus\{y_0,v_2,v_3\}) \cup (S^+\setminus\{u,u'\})$ and so 
  $|N_D^+(v_3)|\le 4$, which contradicts the fact that $\delta=5$.}
\end{proof}

We will show that $D[R]$ contains a path
of length $4$.
If any two vertices in $S_1$ are nonadjacent, then by Lemma~\ref{LePath4}~(2), $D[S]$ contains a path of length $4$, and so does $D[R]$. Now suppose that there are two vertices in $S_1$ that are adjacent. By Claim~\ref{ClAdjacentinS1}, $|S|=4$, $y_0\in S$, $D[S\backslash\{y_0\}]$ is a triangle and $y_0$ dominates $S\backslash\{y_0\}$.  We have that $S_1=S\backslash\{y_0\}$, and $T\neq\emptyset$. Let $D[S_1]=v_1v_2v_3v_1$ and $u\in T$. By Lemma~\ref{lemma:observation}~(4), $S_1$ dominates $T$. Thus, $y_0v_1v_2v_3u$ is a path of length $4$ in $D[R]$. Therefore, in any case, $D[R]$ contains a path of length $4$, and so $at(D)\neq 5$ by Lemma~\ref{lem:at}. By our assumption, $\delta=5$.

\begin{claim}\label{ClDSPath4} 
    No two vertices in $S_1$  are adjacent in $D$, and $D[S]$ contains a path of length $4$. 
\end{claim}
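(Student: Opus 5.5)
The plan is to prove the first assertion by contradiction, using that we now know $\delta=5$. The second assertion then follows from Lemma~\ref{LePath4}~(2) applied to $D[S]$.

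Suppose two vertices of $S_1$ are adjacent. By Claim~\ref{ClAdjacentinS1}, $|S|=4$, $y_0\in S$, $y_0$ dominates $S\setminus\{y_0\}$, and $D[S\setminus\{y_0\}]$ is a triangle $v_1v_2v_3v_1$. Hence $S_1=\{v_1,v_2,v_3\}$. Since $\delta=5$ and $|S|=4$, we get $|T|=1$; write $T=\{u\}$. By Lemma~\ref{lemma:observation}~(2), $|S^+|=3$. By Lemma~\ref{lemma:observation}~(4), each $v_i$ has out-degree $5$ and dominates $S^+\cup\{u\}$, so $N_D^+(v_i)=\{v_{i+1}\}\cup\{u\}\cup S^+$.

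Next consider $y_0$. Since $y_0$ dominates $\{v_1,v_2,v_3\}$ and $d^+_D(y_0)\le d^+_D(v_i)=5$ by minimality, the proof of Claim~\ref{ClAdjacentinS1} gives $\alpha(x_0)=\delta-1=4$. So $y_0$ also dominates $u$ and has exactly one out-neighbour $w$, which must lie in $S^+$, since $N^+_D(S)\setminus R=S^+$.

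The aim is to show that $v_1$ is a strong Seymour vertex, which contradicts the hypothesis on $D$. We have $N^+_D(v_1)=\{v_2,u\}\cup S^+$. Match $v_2\mapsto v_3$. Note that $v_3\notin N^+_D(v_1)$ and $v_1\to v_3$ is not an arc, since $v_3\to v_1$ is; hence $v_3\in N^{++}_D(v_1)$.

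It remains to find a complete matching from $S^+\cup\{u\}$ into $V(D)\setminus(S\cup\{u\}\cup S^+\cup\{x_0\})$; every vertex reached this way lies in $N^{++}_D(v_1)$. Each $z\in S^+\cup\{u\}$ is dominated by all of $v_1,v_2,v_3$. So $z$ has out-degree at least $5$, with at most $|S^+\cup\{u\}|-1=3$ of its out-neighbours inside $S^+\cup\{u\}$. Its possible out-neighbours in $S$ are only $y_0$, and $y_0$ is impossible when $z=u$ or $z=w$. Also $x_0$ is an in-neighbour of $u$, so $x_0$ is not an out-neighbour of $u$, although $x_0$ might be an out-neighbour of a vertex of $S^+$.

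We then bound in-arcs via Lemma~\ref{lemma:outdegre-sum}. The set $S^+\cup\{u\}$ has $4$ vertices and induces at most $6$ arcs, so its total out-degree into the target set is at least $20-6-(\text{arcs to }y_0)-(\text{arcs to }x_0)$. The step I expect to be the main obstacle is verifying the condition of Lemma~\ref{lemma:outdegre-sum} (or Hall's condition directly) for every subset $X\subseteq S^+\cup\{u\}$. Singletons are easy: each $z$ has at least $5-3-2=0$ target out-neighbours, and this needs refinement. For $|X|=k$, the vertices of $X$ have at most $\binom{k}{2}$ internal arcs, at most $k(4-k)$ arcs into the rest of $S^+\cup\{u\}$, and at most $2k$ arcs to $\{x_0,y_0\}$. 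This counting is too weak on its own, so I will supplement it with the first part of Claim~\ref{Clxyz} and the arc-minimality of $D$.

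For instance, apply Claim~\ref{Clxyz} to $x=v_1$, $y=v_2$, $z=v_3$. Since $(v_1,v_3)\notin A(D)$, there is an arc from $\{u\}\cup S^+$ to $\{v_2,v_3\}$. But every vertex of $\{u\}\cup S^+$ is dominated by $v_2$ and $v_3$, so no such arc exists. This is a contradiction outright, which would give the first assertion immediately, without any matching argument. I will therefore try this route first and keep the Hall counting as a fallback.

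With no two vertices of $S_1$ adjacent, Lemma~\ref{LePath4}~(2) applies to $D[S]$, since $\delta^+(D[S])\ge1$ by Lemma~\ref{lemma:observation}~(3), and it yields a path of length $4$ in $D[S]$.
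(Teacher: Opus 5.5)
Your proof is correct, but it takes a different route from the paper. The decisive step is applying Claim~\ref{Clxyz} to the triple $(v_1,v_2,v_3)$. The discussion of $y_0$ and the Hall-counting fallback can simply be deleted; as written that counting does not close anyway, since singletons only get the bound $0$.

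\textbf{Paper's route.} The paper takes a smallest Hall-violating set $S'\subseteq N_D^+(v_1)$, of size $\beta(v_1)$. It shows $v_2\notin S'$: the vertex $v_3$ is an out-neighbour of $v_2$ that no other out-neighbour of $v_1$ reaches, so deleting $v_2$ from $S'$ would give a smaller violator. Hence $S'\subseteq\{u\}\cup S^+$, and $D[S']$ has a vertex $w$ of out-degree $1$. Lemma~\ref{lemma:observation}~(4), applied at $v_1$, then forces $w$ to dominate $v_2\in T'$, contradicting $(v_2,w)\in A(D)$.

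\textbf{Your route.} You observe that $N_D^+(v_1)\setminus\{v_2\}=\{u\}\cup S^+$ is dominated by both $v_2$ and $v_3$. So there is no arc from this set into $\{v_2,v_3\}$. Also $(v_1,v_3)\notin A(D)$ because $(v_3,v_1)\in A(D)$. This contradicts Claim~\ref{Clxyz} at once.

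\textbf{Comparison.} Both proofs exploit the same ``private arc'' $(v_2,v_3)$.
\begin{itemize}
\item Yours packages it through the arc-minimality of $D$, already encoded in Claim~\ref{Clxyz}, and is shorter.
\item Yours also does not need $\delta=5$ or $|T|=1$. It only uses that $N_D^+(v_1)=\{v_2\}\cup T\cup S^+$ and that $v_2,v_3$ dominate $T\cup S^+$. So it rules out the configuration produced by Claim~\ref{ClAdjacentinS1} regardless of $\delta$.
\item The paper's argument is local to the minimum-out-degree vertex $v_1$: it uses only Lemma~\ref{lemma:observation} at $v_1$, not the minimality of $D$. It would therefore survive in a setting where one cannot delete arcs and appeal to a minimal counterexample.
\end{itemize}

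Your derivation of the second assertion from Lemma~\ref{LePath4}~(2), using $\delta^+(D[S])\ge 1$, is exactly the paper's.
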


\begin{proof}
Suppose that there are two vertices in $S_1$ that are adjacent in $D$. 
By Claim~\ref{ClAdjacentinS1}, $|S|=4$, $y_0\in S$, $D[S\backslash\{y_0\}]$ is a triangle and $y_0$ dominates $S\backslash\{y_0\}$.
Recall that $\delta=5$, implying that $|T|=1$. We can assume that $T=\{u\}$, $S=\{y_0,v_1,v_2,v_3\}$, where $v_1v_2v_3v_1$ is a triangle. By Lemma~\ref{lemma:observation}~(2)~and~(4), $|S^+|=3$, and $\{v_1,v_2,v_3\}$ dominates $S^+\cup T$. Thus $N_D^+(v_1)=\{v_2,u\}\cup S^+$. (See Figure~\ref{fig:proof_Claim28}.)  

\begin{figure}[ht]
\begin{center}
\begin{tikzpicture}
[ scale=1.2, myarrow/.style={postaction={decorate, decoration={pre length=0.1cm, markings, mark=at position #1 with {\arrow{stealth}}}}}, myarrow/.default=0.5]
\draw[fill=black] (0,0) {coordinate (x)} circle (0.07);
\draw[fill=black] (2,-1) {coordinate (u)} circle (0.07);
\node at (2.3,-1) {$u$};
\node at (0,-0.3) {$x_0$};
\draw[fill=black] (2,1.6) {coordinate (y)} circle (0.07);
\node at (2.3,1.6) {$y_0$};
\node at (2.3,1) {$v_1$};
\node at (2.3,0.4) {$v_2$};
\node at (2.3,-0.2) {$v_3$};
\draw[fill=black] (2,1) {coordinate (v1)} circle (0.07);
\draw[fill=black] (2,0.4) {coordinate (v2)}circle (0.07);
\draw[fill=black] (2,-0.2) {coordinate (v3)} circle (0.07);
\draw[myarrow] (y) to[out=200, in=160]  (u);
\draw[myarrow] (y) to[out=220, in=140]  (v3);
\draw[myarrow] (y) to[out=240, in=120]  (v2);
\draw[myarrow] (y) -- (v1);
\draw[myarrow] (v1) -- (v2);
\draw[myarrow] (v2) -- (v3);
\draw[myarrow] (v3) to[out=120, in=240]  (v1);
\draw[myarrow] (v1) to[out=220, in=140]  (u);
\draw[myarrow] (v2) to[out=240, in=120]  (u);
\draw[myarrow] (v3) --  (u);
\node at (0.55,0){$\rightrightarrows$};
\node at (3.25,0.7){$\rightarrow$};
\draw [densely dotted] (1.0,-1.5) rectangle (3,-0.5);
\draw [densely dotted] (1.0,-0.5) rectangle (3,2);
\draw [densely dotted] (3,2) -- (3.5,2);
\draw [densely dotted] (3,-0.5) -- (3.5,0);
\node at (0.7,1.5) {$S$};
\node at (0.7,-1.2) {$T$};
\node at (2,-2) {$N_D^+(x)$};
\draw [densely dotted] (3.5,0) rectangle (5.5,2);
\draw[fill=black] (4.5,1.6) circle (0.07);
\draw[fill=black] (4.5,1) circle (0.07);
\draw[fill=black] (4.5,0.4) circle (0.07);
\node at (5.8,1.5) {$S^+$};
\draw [densely dotted] (3.5,0) rectangle (5.5,-1.5);
\node at (4.5,-2) {$N_D^{++}(x)$};
\end{tikzpicture}
\end{center}
\caption{An illustration for a proof of Claim~\ref{ClDSPath4}.}\label{fig:proof_Claim28}
\end{figure}

Recall that $d_D^+(v_1)=\delta$, $\alpha(v_1)=\alpha(x_0)=4$. Let $S'$ be a subset of $N_D^+(v_1)$ such that $|S'|>|N_D^+(S')\backslash N_D^+(v_1)|$ and $|S'|=\beta(v_1)$. Set $T'=N_D^+(v_1)\backslash S'$ and $S'^+=N_D^+(S')\backslash N_D^+(v_1)$. We claim that $v_2\notin S'$. Suppose that $v_2\in S'$, then $v_3\in S'^+$. Note that $v_3$ dominates $S^+\cup\{u\}$ and so $v_3$ also dominates
$S'\backslash\{v_2\}$. It follows that $|S'\backslash\{v_2\}|>|N_D^+(S'\backslash\{v_2\})\backslash N_D^+(v_1)|$, contradicting the minimality of $S'$. Thus, as we claimed, $v_2\notin S'$ and then $|S'|\leq 4$. This also implies that $S'$ contains one vertex $w$ with $d_{S'}^+(w)=1$. By Lemma~\ref{lemma:observation}~(4), $w$ dominates $T'$, contradicting that $v_2\in T'$ dominates $N_D^+(v_1)\backslash\{v_2\}$.

This shows that $S_1$ does not contain two adjacent vertices in $D$. By Lemma~\ref{LePath4}~(2), $D[S]$ contains a path of length $4$.
\end{proof}

By Claim~\ref{ClDSPath4}, we see that $|S|=5$, that is, $S=R=N_D^+(x_0)$ and $T=\emptyset$. If $D[S]$ is not strongly connected, then a strongly connected component $H$ of $D[S]$ such that there is no arc from $V(H)$ to $S\backslash V(H)$ will have minimum out-degree at least $1$ and each two vertices in $H$ with out-degree exactly $1$ are nonadjacent. By Lemma~\ref{LePath4}~(2), $H$ will contain a path of length $4$, a contradiction. So we conclude that $D[S]$ is strongly connected.
By Claim~\ref{ClDSPath4} again, no two vertices in $S_1$ are adjacent, implying that $S_4=\emptyset$ and $|S_3|\leq 1$. 

\begin{claim}\label{S4S3}
 We have $|S_3|=1$ and $|S_1|,|S_2|\geq 1$. 
\end{claim}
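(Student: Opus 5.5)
We work in the situation reached just before the claim: $S=R=N_D^+(x_0)$, $|S|=5$, $|S^+|=4$, $T=\emptyset$, $\delta=5$. Also $D[S]$ is strongly connected with $\delta^+(D[S])\ge 1$, no two vertices of $S_1$ are adjacent, $S_4=\emptyset$ and $|S_3|\le 1$. Every vertex of $S$ has out-degree in $\{1,2,3\}$ inside $D[S]$. The plan is to rule out the three bad degree patterns $S_2=\emptyset$, $S_1=\emptyset$ and $S_3=\emptyset$ separately.

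\emph{Case $S_2=\emptyset$.} Since $|S_3|\le 1$, at least four vertices lie in $S_1$. These vertices are pairwise nonadjacent, so the unique out-neighbour in $D[S]$ of each of them is the same vertex $w$, and $w\in S_3$. But every out-neighbour of $w$ inside $S$ lies in $S_1$ and sends its arc back to $w$. This creates a $2$-cycle, which is impossible in an oriented graph.

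\emph{Case $S_1=\emptyset$.} All degrees inside $D[S]$ are then at least $2$, while $D[S]$ has at most $\binom52=10$ arcs. Hence $D[S]$ is the regular tournament on five vertices and $S_3=\emptyset$. Since every vertex of $S$ has out-degree at least $5$, each $v\in S$ sends at least $3$ arcs into the $4$-set $S^+$. So there are at least $15$ arcs from $S$ to $S^+$, and every vertex of $S^+$ receives from at least three vertices of $S$. I would then exploit the extremal choice of $x_0$. A natural choice is a vertex $v\in S$ with $d_D^+(v)=5$; if no such vertex exists, $y_0$ has larger out-degree and the counting becomes even tighter. We have $N_D^+(v)$ consisting of two vertices of $S$ and three vertices of $S^+$, and I would compare $\alpha(v)$, $\beta(v)$ and $OS(D[N_D^+(v)])$ with the corresponding data of $x_0$. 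Condition (3) forces $OS(D[N_D^+(x_0)])=(2,2,2,2,2)$ to be lexicographically maximal. It should be easy to exhibit such a $v$ whose out-neighbourhood contains a vertex of in-neighbourhood degree $\ge3$, once we use that $v$ is not strong Seymour. If the sequence comparison alone does not suffice, I would apply Claim~\ref{Clxyz} to a path $x_0\to y\to z$ with $z\in S^+$, which forces arcs from $S$ into $z$ or $y$ and yields the needed density.

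\emph{Case $S_3=\emptyset$.} All degrees inside $D[S]$ lie in $\{1,2\}$, so $S_1\ne\emptyset$ because there are at most $10$ arcs and not all of them can have out-degree exactly $2$ with strong connectivity plus the absence of a path of length $4$ in proper pieces. Take $v\in S_1$. By Lemma~\ref{lemma:observation}~(4), $d_D^+(v)=5$ and $N_D^+(v)=\{w\}\cup S^+$, where $w$ is its out-neighbour in $S$. The aim is again to find a contradiction with the choice (1)--(3) of $x_0$: show that $\alpha(v)\ge\alpha(x_0)$ and $\beta(v)\le\beta(x_0)$, while $OS(D[N_D^+(v)])$ is lexicographically larger than the sequence $OS(D[S])$, whose maximum entry is $2$. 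For this I would count arcs from $S^+$ into $S^+$. The vertices of $S_2$ each send at least $3$ arcs into $S^+$, and the vertices of $S_1$ dominate $S^+$. Hence $S^+$ receives many arcs from $S$, which limits its own out-arcs into $S$. Combined with the fact that $v$ is not a strong Seymour vertex, via Lemma~\ref{lemma:observation}~(3) applied at $v$, this should force some vertex of $N_D^+(v)$ to have $3$ out-neighbours inside $N_D^+(v)$.

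I expect this last comparison to be the main obstacle: arranging the ties in $\alpha$ and $\beta$ so that the lexicographic criterion (3) actually applies.
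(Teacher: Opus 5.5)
Your case $S_2=\emptyset$ is correct. The paper disposes of it, and of $S_1=\emptyset$ once $|S_3|=1$, by a one-line count, reducing the claim to $S_3\neq\emptyset$. The two remaining cases are the whole content of the claim, however, and for both you only describe what you would try. This is a genuine gap, and in each case the tool you propose fails in the hard subcase.

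In the regular-tournament case, label $S=\{v_1,\dots,v_5\}$ with $v_i\to v_{i+1},v_{i+2}$; here $\alpha(x_0)=2$. Comparing $x_0$ with a competitor in $S$ does work when at least two vertices of $S$ have out-degree $5$. For example, consecutive $v_i,v_{i+1}$ share $v_{i+2}$ and at least two vertices of $S^+$, so $\alpha(v_i)\ge 3$. But suppose at most one vertex, say $v_1$, has out-degree $5$. Then rules (1)--(3) give nothing: the minimum-out-degree vertices of $N_D^+(v_1)$ may lie in $S^+$ and have few out-neighbours inside $N_D^+(v_1)$. If all of $S$ has out-degree $6$, there is no competitor in $S$ at all. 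Your fallback, Claim~\ref{Clxyz} on $x_0\to y\to z$, is vacuous here, because $S\setminus\{v_1\}$ dominates $S^+$.

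The missing idea is a reduction to Theorem~\ref{thm:SMC-MinOutdegree}. Delete $(v_1,v_2)$ and $(v_1,v_3)$. Then $v_1$ is the unique vertex of out-degree at most $4$, hence a strong Seymour vertex of $D'$. Since $S\setminus\{v_1\}$ dominates $S^+$, we have $N^{++}_{D'}(v_1)\cap S=\emptyset$. Adding $(v_2,v_4),(v_3,v_5)$ to the matching from $N^+_{D'}(v_1)\subseteq S^+$ therefore gives a complete matching in $D$.

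Separately, your assertion that every vertex of $S^+$ receives arcs from at least three vertices of $S$ is false. Each vertex of $S$ may miss one vertex of $S^+$, and these misses can all fall on the same vertex.

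In the case $S_3=\emptyset\neq S_1$, the lexicographic route hits exactly the tie problem you anticipate: for $v\in S_1$ nothing guarantees $\alpha(v)\ge\alpha(x_0)$. The paper does not look for a better competitor there; it derives a direct contradiction.

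Take $u\in S_1$. Since $\alpha(x_0)\le 2$, every in- or out-neighbour of $u$ in $S$ dominates $S^+$. Otherwise that neighbour has out-degree $5$ and shares at least three out-neighbours with $u$, forcing $\alpha\ge 3$ at $u$ or at that neighbour.

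Let $v$ be the out-neighbour of $u$ in $S$ and $w_1,w_2$ the out-neighbours of $v$ in $S$. Apply Claim~\ref{Clxyz} to the path $u\to v\to w_i$; the useful path starts at $u$, not at $x_0$. Since $v$ dominates $S^+$, this gives an arc from $S^+$ into $w_i$. Hence $w_i\notin S_1$, so $w_i\in S_2$, $d_D^+(w_i)=5$ and $w_i\not\to u$. Therefore $N^+_S(w_i)\subseteq\{w_{3-i},z\}$, where $z$ is the fifth vertex of $S$. This gives $d^+_S(w_1)+d^+_S(w_2)\le 3<4$, a contradiction.
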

\begin{proof} 
From Claim~\ref{ClDSPath4},  it is sufficient to show that $S_3\neq \emptyset$. Suppose to the contrary that $S_3=\emptyset$. Then $\alpha(x_0)\le 2$.

First, assume that $S_1=\emptyset$. Then $OS(D[S])=(2,2,2,2,2)$, and $D[S]$ is a 2-regular tournament, say $S=\{v_1,v_2,v_3,v_4,v_5\}$, where $v_i$ dominates $v_{i+1}$ and $v_{i+2}$ for $i=1,\ldots,5$ (the subscripts are taken modulo 5). If there are at least two vertices in $S$ with out-degree 5 in $D$, say $d_D^+(v_1)=d_D^+(v_2)=5$, then $v_1,v_2$ have at least two common out-neighbors in $S^+$ and one common out-neighbor in $S$. It follows that $\alpha(v_1)=3>\alpha(x_0)=2$, a contradiction to the choice of $x_0$. So we can suppose that all vertices in $S\backslash\{v_1\}$ have out-degree 6 in $D$. Then $S\backslash\{v_1\}$ dominates $S^+$. Let $D'$ be the digraph obtained from $D$ by removing two arcs $(v_1,v_2)$, $(v_1,v_3)$. Then $v_1$ is the unique vertex with out-degree at most 4 in $D'$. By Theorem~\ref{thm:SMC-MinOutdegree}, $v_1$ is a strong Seymour vertex in $D'$. Note that $N_{D'}^+(v_1)\subseteq S^+$ and $N_{D'}^{++}(v_1)\cap S=\emptyset$. Let $M$ be a complete matching from $N_{D'}^+(v_1)$ to $N_{D'}^{++}(v_1)$. Then $M\cup\{(v_2,v_4),(v_3,v_5)\}$ is a complete matching from $N_D^+(v_1)$ to $N_D^{++}(v_1)$, implying that $v_1$ is a strong Seymour vertex in $D$, a contradiction. 

Second, we assume that $S_1\neq\emptyset$, say $u\in S_1$. We claim that each vertex in $N_S^+(u)\cup N_S^-(u)$ dominates $S^+$. Suppose otherwise there is a vertex $v\in N_S^+(u)\cup N_S^-(u)$ that does not dominate $S^+$. Then $d_D^+(v) \le  2+ d_{S^+}(v)\le 2+3=5$. Thus $d_D^+(v)=5$ and $u,v$ have at least three common out-neighbors in $D$. It follows that either $\alpha(u)$ or $\alpha(v)$ is greater than or equal to $3$, a contradiction. Thus, as we claimed, each vertex in $N_S^+(u)\cup N_S^-(u)$ dominates $S^+$. 

Now let $v$ be the out-neighbor of $u$ in $S$ (then $d^+_S(v)=2$ by Claim~\ref{ClDSPath4}) and let $w_1,w_2$ be the out-neighbors of $v$ in $S$. By Claim~\ref{Clxyz}, $w_1$, and similarly $w_2$, does not dominate $S^+$. It follows that $w_1,w_2\in S_2$ and $d^+_D(w_1)=d^+_D(w_2)=5$. This also implies $w_1,w_2\notin N_D^-(u)$. Thus, $d_S^+(w_1)+d_S^+(w_2)\leq 3$,  a contradiction to the fact that  $w_1,w_2\in S_2$.
\end{proof}

\begin{claim}\label{Clu1S1u2S2}
    If $u\in S_1, v\in S_2$ with $(u,v)\in A(D)$, then $d_D^+(v)=5$.
\end{claim}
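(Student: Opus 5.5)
We argue by contradiction, in the same setting as Claim~\ref{S4S3}: $\delta=5$, $S=R=N_D^+(x_0)$, $T=\emptyset$ and $|S^+|=4$. Let $u\in S_1$ and $v\in S_2$ with $(u,v)\in A(D)$, and suppose $d_D^+(v)\ge 6$.

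\emph{Step 1 (the local structure).} By Lemma~\ref{lemma:observation}~(4), $d_D^+(u)=5$ and $u$ dominates $S^+$, so $N_D^+(u)=\{v\}\cup S^+$. The vertex $v$ has exactly two out-neighbours $w_1,w_2$ in $S$. All its other out-neighbours lie in $S^+$, which has only $4$ vertices. Hence $d_D^+(v)=6$ and $v$ dominates $S^+$. Since $D$ is oriented, $v$ has no in-neighbour in $S^+$. Also $(u,w_1),(u,w_2)\notin A(D)$, because $v$ is the unique out-neighbour of $u$ in $S$. So $w_1,w_2\in N_D^{++}(u)$.

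\emph{Step 2 (deleting the arc $(u,v)$).} Let $D'=D-(u,v)$. Then $u$ is the unique vertex of out-degree $4$ in $D'$, since every other vertex keeps out-degree at least $5$. By Theorem~\ref{thm:SMC-MinOutdegree}, $u$ is a strong Seymour vertex of $D'$. So there is a complete matching $M$ from $N_{D'}^+(u)=S^+$ to $N_{D'}^{++}(u)$. By Step~1, $v\notin N_{D'}^{++}(u)$. Moreover
\[N_D^{++}(u)=\bigl(N_{D'}^{++}(u)\setminus\{v\}\bigr)\cup \bigl(N_D^+(v)\setminus N_D^+(u)\bigr)\supseteq N_{D'}^{++}(u)\cup\{w_1,w_2\}.\]
If some $w_i$ is not covered by $M$, then $M\cup\{(v,w_i)\}$ is a complete matching from $N_D^+(u)$ to $N_D^{++}(u)$. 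This contradicts the assumption that $u$ is not a strong Seymour vertex.

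\emph{Step 3 (the main obstacle).} The remaining case is that every such $M$ covers both $w_1$ and $w_2$. Rather than fixing one $M$, I will apply Hall's Theorem (Theorem~\ref{thm:Hall}) to $u$ in $D$. Consider sets of the form $X\cup\{v\}$ with $X\subseteq S^+$; Hall's condition for sets $X\subseteq S^+$ alone already holds by Step~2. Hall's condition can fail only for some $X\subseteq S^+$ with
\[|N_D^+(X)\cap N_{D'}^{++}(u)|=|X| \quad\text{and}\quad \{w_1,w_2\}\subseteq N_D^+(X).\]
So $X$ is a tight set with $|X|\ge 2$. The arcs out of $X$ are confined to $S\cup S^+\cup N$, where $N:=N_D^+(X)\cap N_{D'}^{++}(u)$ has $|N|=|X|$ and contains $w_1,w_2$. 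Each vertex of $X$ has out-degree at least $5$. I will use this to force many arcs from $X$ into $S$, together with $X\cap N_D^-(v)=\emptyset$. Then I will get a contradiction in one of three ways: (a) Claim~\ref{Clxyz} applied to the path $v\,w_i$ and a suitable in-neighbour; (b) a vertex of $X$ with three common out-neighbours with a vertex of minimum out-degree, giving $\alpha\ge 3$, which against the maximality of $\alpha(x_0)=3$ (from $|S_3|=1$) would already be needed elsewhere; or (c) a $4$-path argument inside some out-neighbourhood, as in Claim~\ref{ClDSPath4}.

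I expect this tight-set analysis to be the hardest part. If it becomes unwieldy, my fallback is a different deletion: delete instead an arc $(v,s)$ with $s\in S^+$. This keeps $\delta=5$, so the minimality of $D$ applies. As argued for Claim~\ref{Clxyz}, every strong Seymour vertex of $D'$ other than $v$ remains one in $D$. So $v$ is strong Seymour in $D'$, and the remaining task is to extend its matching to cover $s$.
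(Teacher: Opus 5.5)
Your Steps 1 and 2 are correct, and so is the Hall analysis at the start of Step 3. Since $u$ is a strong Seymour vertex of $D'=D-(u,v)$ by Theorem~\ref{thm:SMC-MinOutdegree}, and $N_D^{++}(u)=N_{D'}^{++}(u)\cup\{w_1,w_2\}$, the only possible obstruction is a set $X\subseteq S^+$ with $|X|=k\ge 2$ such that $N:=N_D^+(X)\cap N_{D'}^{++}(u)$ has size $k$ and contains $w_1,w_2$. This case is the entire content of the claim, and you do not derive a contradiction in it; you only list candidate tools.

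Option (b) does not work as stated. $x_0$ was chosen with $\alpha(x_0)$ \emph{maximum}, so a minimum-out-degree vertex with $\alpha\ge 3$ gives a contradiction only if $\alpha(x_0)\le 2$. Also, $|S_3|=1$ does not give $\alpha(x_0)=3$, because the vertex of $S_3$ may have out-degree $6$ or $7$. A tie would force you into the secondary criteria (minimum $\beta$, then lexicographically maximum out-degree sequence). That is exactly the case analysis on $D[S]$ the paper performs after using Claim~\ref{Clxyz} to put $w_1,w_2\in S_2\cup S_3$. Your fallback of deleting $(v,s)$ is likewise not carried out.

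The missing step is a short count, which your phrase ``force many arcs from $X$ into $S$'' was heading toward. Since $u$ and $v$ both dominate $S^+$, every $x\in X$ satisfies $N_D^+(x)\subseteq (S^+\setminus\{x\})\cup N$. Bounding arcs inside $X$ by $\binom{k}{2}$, arcs to $S^+\setminus X$ by $k(4-k)$, and arcs to $N\setminus\{w_1,w_2\}$ by $k(k-2)$, at least $5k-\binom{k}{2}-k(4-k)-k(k-2)=3k-\binom{k}{2}$ arcs go from $X$ into $\{w_1,w_2\}$.
\begin{itemize}
\item For $k=2$ this is $5>2k$, which is impossible.
\item For $k\in\{3,4\}$ it is $6$, so each $w_i$ receives at least $6-k\ge 2$ arcs from $S^+$.
\end{itemize}
Then each $w_i$ has at most $2$ out-neighbours in $S^+$. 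Since $N_D^+(w_i)\subseteq S\cup S^+$, $d_D^+(w_i)\ge 5$ and $(w_i,v)\notin A(D)$, this forces $d_S^+(w_i)=3$ for $i=1,2$. That contradicts $|S_3|\le 1$, which was established just before Claim~\ref{S4S3}.

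With this count your argument becomes a complete proof, and a genuinely different one from the paper's. The paper relies on the extremal choice of $x_0$ (the $\alpha$, $\beta$ and out-degree-sequence comparisons). Your route instead uses Theorem~\ref{thm:SMC-MinOutdegree} on $D-(u,v)$, Hall's theorem, and the bound $|S_3|\le 1$, with no case analysis of $D[S]$.
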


\begin{proof}
    Suppose that $d_D^+(v)\neq 5$. Then $d_D^+(v)=6$ and $\{u,v\}$ dominates $S^+$. Set $N_S^+(v)=\{w_1,w_2\}$. By Claim~\ref{Clxyz}, there is at least one arc from $S^+$ to $w_1$ and at least one arc from $S^+$ to $w_2$. Then each of $w_1$ and $w_2$ does not dominate $S^+$, and therefore, $w_1,w_2\in S_2\cup S_3$ by Lemma~\ref{lemma:observation}~(4). Let $z$ be the vertex in $S\backslash\{u,v,w_1,w_2\}$.

    First assume that both $w_1$ and $w_2\in S_2$. 
    Then, since $w_i$ does not dominate $S^+$, $d_D^+(w_i)=5$ for each $i\in\{1,2\}$. In addition,
    $z\in S_3$ by Claim~\ref{S4S3}. Then the out-degree sequence of $D[N^+_D(x_0)]$ is $(3,2,2,2,1)$. 
    Moreover, this implies that $w_1,w_2$ are adjacent, say $(w_1,w_2)\in A(D)$. It follows that $(w_1,u),(w_2,z),(w_2,u),(z,u),(z,v),(z,w_1)\in A(D)$ (see Figure~\ref{FiConstructionDS}(i)). Now considering the neighbors of $w_1$, $w_2$, $u$ in $S^+$, it holds that 
    $w_1$, $w_2$ have three common out-neighbors in $D$ and $w_1$, $u$ have three common out-neighbors in $D$. It follows that $\alpha(w_1)=3$, $\beta(w_1)\leq 5$ and the out-degree sequence of $D[N^+_D(w_1)]$ is larger than that of $D[N_D^+(x_0)]$, a contradiction to the choice of $x_0$.

    Second we assume without loss of generality that $w_1\in S_3$, implying that $w_2\in S_2$ and $(w_1,w_2),(w_1,z),$ $(w_1,u),(w_2,z),(w_2,u)\in A(D)$. Moreover, we have $(z,v)\in A(D)$; for otherwise, $z\in S_1$ and $(z,u)\in A(D)$, contradicting Claim~\ref{ClDSPath4}. Note that $d_D^+(w_2)=5$ and $w_2,u$ have three common out-neighbors in $S^+$. 
    Then the out-degree sequence of $D[N^+_D(x_0)]$ is $(3,2,2,1,1)$ or $(3,2,2,2,1)$. 
    If $z\in S_1$, then $w_2,z$ have three common out-neighbors in $S^+$
    (see Figure~\ref{FiConstructionDS}~(ii)); and if $z\in S_2$, then $w_2,z$ have one common out-neighbor in $S$ and at least two common out-neighbors in $S^+$ (see Figure~\ref{FiConstructionDS}~(iii)). For each case, we have that  $\alpha(w_2)=3$, $\beta(w_2)\leq 5$ and the out-degree sequence of $D[N^+_D(w_2)]$  is larger than that of $D[N_D^+(x_0)]$, a contradiction. 
\end{proof}

\begin{figure}[ht]
\centering
\begin{tikzpicture}[scale=0.7, myarrow/.style={postaction={decorate,
    decoration={pre length=0.1cm, markings, mark=at position #1 with {\arrow{stealth}}}}}, myarrow/.default=0.5]
\begin{scope}[xshift=-10cm]
\foreach \x in {1,2,...,5} {\draw[fill=black] (\x*72+18:2.6) {coordinate (x\x)} circle (0.13);
\coordinate (y\x) at (\x*72+18:3);}
\draw[myarrow] (x1)--(x2); \draw[myarrow] (x2)--(x3); \draw[myarrow] (x2)--(x4);
\draw[myarrow] (x3)--(x4); \draw[myarrow] (x3)--(x1); \draw[myarrow] (x4)--(x5);
\draw[myarrow] (x4)--(x1); \draw[myarrow] (x5)--(x1); \draw[myarrow] (x5)--(x2); 
\draw[myarrow] (x5)--(x3);
\node at (y1) {$u$}; \node at (y2) {$v$}; \node at (y3) {$w_1$}; 
\node at (y4) {$w_2$}; \node at (y5) {$z$};
\node at (0,-3.5) {(i) $w_1,w_2\in S_2$};
\end{scope}
\begin{scope}
\foreach \x in {1,2,...,5} {\draw[fill=black] (\x*72+18:2.6) {coordinate (x\x)} circle (0.13);
\coordinate (y\x) at (\x*72+18:3);}
\draw[myarrow] (x1)--(x2); \draw[myarrow] (x2)--(x3); \draw[myarrow] (x2)--(x4);
\draw[myarrow] (x3)--(x4); \draw[myarrow] (x3)--(x5); \draw[myarrow] (x3)--(x1);
\draw[myarrow] (x4)--(x5); \draw[myarrow] (x4)--(x1); \draw[myarrow] (x5)--(x2);
\node at (y1) {$u$}; \node at (y2) {$v$}; \node at (y3) {$w_1$}; 
\node at (y4) {$w_2$}; \node at (y5) {$z$};
\node at (0,-3.5) {(ii) $w_1\in S_3$, $z\in S_1$};
\end{scope}
\begin{scope}[xshift=10cm]
\foreach \x in {1,2,...,5} {\draw[fill=black] (\x*72+18:2.6) {coordinate (x\x)} circle (0.13);
\coordinate (y\x) at (\x*72+18:3);}
\draw[myarrow] (x1)--(x2); \draw[myarrow] (x2)--(x3); \draw[myarrow] (x2)--(x4);
\draw[myarrow] (x3)--(x4); \draw[myarrow] (x3)--(x5); \draw[myarrow] (x3)--(x1);
\draw[myarrow] (x4)--(x5); \draw[myarrow] (x4)--(x1); \draw[myarrow] (x5)--(x1); 
\draw[myarrow] (x5)--(x2);
\node at (y1) {$u$}; \node at (y2) {$v$}; \node at (y3) {$w_1$}; 
\node at (y4) {$w_2$}; \node at (y5) {$z$};
\node at (0,-3.5) {(iii) $w_1\in S_3$, $z\in S_2$};
\end{scope}
\end{tikzpicture}
\caption{Configurations of $D[S]$ in Claim~\ref{Clu1S1u2S2}.}\label{FiConstructionDS}
\end{figure}

\begin{claim}\label{ClS2OutDegree5}
    There is a vertex in $S_2$ with out-degree $5$ in $D$.
\end{claim}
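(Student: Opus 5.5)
By Claim~\ref{S4S3} we have $S_1\neq\emptyset$, and the plan is to exhibit the required vertex as an out-neighbor of a vertex in $S_1$. Take $u\in S_1$ and let $v$ be its unique out-neighbor in $S$. The first step is to show $v\in S_2$; once that is done, Claim~\ref{Clu1S1u2S2} gives $d_D^+(v)=5$ immediately, which is the statement.

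To see that $v\in S_2$, note that $v\notin S_1$ by Claim~\ref{ClDSPath4}, since two vertices of $S_1$ are never adjacent. We also know $S_4=\emptyset$, so it remains to exclude $v\in S_3$. If $S_1$ contains a vertex whose out-neighbor in $S$ is not the unique vertex $s_3$ of $S_3$ (Claim~\ref{S4S3}), we take that vertex as $u$ and are done. So the only case needing work is when every vertex of $S_1$ points, within $S$, exactly to $s_3$.

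In that case I would do a small counting argument on the five vertices of $S$. Write $a=|S_1|$ and $b=|S_2|$, so $a+b=4$ and the number of arcs of $D[S]$ is $a+2b+3=11-a$. Every vertex of $S_1$ has $s_3$ as its only out-neighbor in $S$, so it is not dominated by any other vertex of $S_1$ (Claim~\ref{ClDSPath4}) nor by $s_3$ (the arc goes the other way). Hence all in-arcs to $S_1$ inside $S$ come from $S_2$. Since $D[S]$ is strongly connected, each vertex of $S_1$ has at least one in-neighbor, and it lies in $S_2$; moreover $s_3$ has three out-neighbors among $S_2\cup S_1\setminus N^-(s_3)$, which forces $s_3$'s out-neighbors into $S_2$, so $b\ge 3$. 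A short case check on $a\in\{1,2\}$ (with $b=4-a$) should then give a contradiction with the degree requirements, $D[S]$ being an oriented graph on $5$ vertices. For example, with $a=1$, $b=3$, the vertex $s_3$ dominates all of $S_2$, and the three $S_2$-vertices must each send $2$ arcs among themselves, $u$ and $s_3$. They cannot send to $s_3$, and the tournament on $S_2$ has only $3$ arcs, so $u$ receives all three, giving $6=3+3$, which is consistent. That means the counting alone may not be enough.

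If such a configuration survives, I would fall back on the structural tools, as in the proof of Claim~\ref{Clu1S1u2S2}. If every vertex of $S_2$ had out-degree $6$, each would dominate $S^+$. Then for $w\in S_2$ with $(s_3,w)\in A(D)$ and $(x_0,w)$, I would apply Claim~\ref{Clxyz} to a path $u\,s_3\,w$ (with $(u,w)\notin A(D)$), or to $x\,y\,z$ triples inside $N^+(u)$. This forces an arc from $S^+$ into $S_2$, which contradicts $S_2$ dominating $S^+$ in the oriented graph. This Claim~\ref{Clxyz} step is the main obstacle: one has to verify that the required non-arc holds and that the out-neighborhood of $u$ consists of $s_3$ together with $S^+$ (by Lemma~\ref{lemma:observation}~(4), since $T=\emptyset$). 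Using $N^+(u)=\{s_3\}\cup S^+$ and the path $u\,s_3\,w$, Claim~\ref{Clxyz} yields an arc from $S^+$ to $s_3$ or to $w$. An arc into $w$ finishes the argument, so the remaining task is to handle arcs into $s_3$. For that I would switch to a different $w$, or use the symmetric argument through $\alpha$-maximality.
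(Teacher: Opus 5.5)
Your reduction is correct and matches the paper up to the point where the configuration is pinned down. In the remaining case, $s_3$ dominates $S_2$, so $b\ge 3$ and hence $a=1$; the case check on $a\in\{1,2\}$ is unnecessary. Every vertex of $S_2$ then dominates $u$.

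The gap is in the final step. You apply Claim~\ref{Clxyz} to the path $u\,s_3\,w$, which gives an arc from $N^+_D(u)\setminus\{s_3\}=S^+$ into $\{s_3,w\}$. Nothing prevents that arc from entering $s_3$. Indeed, $s_3$ has three out-neighbors in $S$ and needs only two in $S^+$, so up to two vertices of $S^+$ may point to it. Switching to another $w\in S_2$ does not help, because a single arc from $S^+$ into $s_3$ satisfies the conclusion of Claim~\ref{Clxyz} for every choice of $w$. The appeal to ``$\alpha$-maximality'' is not an argument, and ``triples inside $N^+_D(u)$'' point to the wrong place. So the case $d^+_D(w)=6$ for all $w\in S_2$ is never excluded.

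The missing idea is to use the structure your counting already forces, namely that $D[S_2]$ is a directed triangle. Each $w\in S_2$ has exactly two out-neighbors in $S$, one of which is $u$, so exactly one lies in $S_2$; write the triangle as $w_1w_2w_3w_1$. Then apply Claim~\ref{Clxyz} with $x=w_1$ instead of $x=u$:
\begin{itemize}
\item If every $w_i$ has out-degree $6$, then (since $T=\emptyset$ and $|S^+|=4$) each $w_i$ dominates $S^+$, and $N^+_D(w_1)=\{w_2,u\}\cup S^+$.
\item Take the path $w_1w_2w_3$; note $(w_1,w_3)\notin A(D)$.
\item Every vertex of $N^+_D(w_1)\setminus\{w_2\}=\{u\}\cup S^+$ is dominated by both $w_2$ and $w_3$.
\item Hence there is no arc from that set into $\{w_2,w_3\}$, which contradicts Claim~\ref{Clxyz}.
\end{itemize}
This is exactly how the paper finishes the proof.
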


\begin{proof}
    Let $u\in S_1$, and $v$ be the out-neighbor of $u$ in $S$. If $v\in S_2$, then by Claim~\ref{Clu1S1u2S2}, $d_D^+(v)=5$, as desired. So assume that $v\in S_3$. Set $N_D^+(v)=\{w_1,w_2,w_3\}$. If $w_1\in S_1$, then the out-neighbor of $w_1$ in $S$ is either $w_2$ or $w_3$, which is in $S_2$ by Claims~\ref{ClDSPath4}~and~\ref{S4S3}. It follows that either $w_2$ or $w_3$ has out-degree 5 in $D$ by Claim~\ref{Clu1S1u2S2}. So we assume that $w_1$, and similarly, $w_2,w_3$, are all in $S_2$. This implies that $\{w_1,w_2,w_3\}$ dominates $u$ and $D[\{w_1,w_2,w_3\}]$ is a triangle, see Figure~\ref{FiClS2OutDegree5}.

\begin{figure}[ht]
\centering
\begin{tikzpicture}[scale=0.7, myarrow/.style={postaction={decorate,
    decoration={pre length=0.1cm, markings, mark=at position #1 with {\arrow{stealth}}}}}, myarrow/.default=0.5]
\begin{scope}%[xshift=-8cm]
\foreach \x in {1,2,...,5} {\draw[fill=black] (\x*72+18:2.6) {coordinate (x\x)} circle (0.13);
\coordinate (y\x) at (\x*72+18:3);}
\draw[myarrow] (x1)--(x2); \draw[myarrow] (x2)--(x3); \draw[myarrow] (x2)--(x4);
\draw[myarrow] (x2)--(x5); \draw[myarrow] (x3)--(x4); \draw[myarrow] (x4)--(x5);
\draw[myarrow] (x5)--(x3); \draw[myarrow] (x3)--(x1); \draw[myarrow] (x4)--(x1); 
\draw[myarrow] (x5)--(x1);
\node at (y1) {$u$}; \node at (y2) {$v$}; \node at (y3) {$w_1$}; 
\node at (y4) {$w_2$}; \node at (y5) {$w_3$};
\end{scope}
\end{tikzpicture}
\caption{Configuration of $D[S]$ in Claim~\ref{ClS2OutDegree5}.}\label{FiClS2OutDegree5}
\end{figure}

Suppose that $d_D^+(w_i)=6$ for each $i\in\{ 1,2,3\}$. Then $\{w_1,w_2,w_3\}$ dominates $S^+$. It follows that $(w_1,w_2),(w_2,w_3)\in A(D)$ and there are no arcs from $N_D^+(w_1)\backslash\{w_2\}$ to $\{w_2,w_3\}$, contradicting Claim~\ref{Clxyz}.    
\end{proof}

\begin{claim}\label{ClS2OutDegree5Seymour}
    There is a vertex in $S_2$ with out-degree $5$ in $D$ that is a strong Seymour vertex in $D[S]$.
\end{claim}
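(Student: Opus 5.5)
We work inside $D[S]$, a strongly connected oriented graph on five vertices. We already know $S_4=\emptyset$, $|S_3|=1$, $|S_1|,|S_2|\ge 1$, and no two vertices of $S_1$ are adjacent. Since $D[S]$ has at most $10$ arcs, the out-degree sequence of $D[S]$ is one of $(3,2,2,1,1)$, $(3,2,2,2,1)$, $(3,2,1,1,1)$, and so on. The case $(3,2,1,1,1)$ cannot occur, because three vertices of $S_1$ among five vertices, with the vertex of $S_3$ and the $S_2$ vertex needing in-arcs from them, would force two $S_1$ vertices to be adjacent. The plan is therefore to enumerate the few possible configurations of $D[S]$ and exhibit, in each, a vertex $v\in S_2$ with $d^+_D(v)=5$ together with a complete matching from $N^+_S(v)$ to $N^{++}_{D[S]}(v)$.

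\textbf{Step 1: locating degree-$5$ vertices of $S_2$.} Claim~\ref{Clu1S1u2S2} says that every out-neighbour in $S_2$ of a vertex of $S_1$ has out-degree $5$ in $D$. Claim~\ref{ClS2OutDegree5} guarantees at least one such vertex. For each configuration I will pick $v\in S_2$ that is the out-neighbour of some $u\in S_1$ whenever possible. Otherwise I will use the argument in the proof of Claim~\ref{ClS2OutDegree5}, where $u\to w\in S_3$ and one of $w$'s out-neighbours in $S_1$ leads to an $S_2$ vertex.

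\textbf{Step 2: checking the matching.} For $v\in S_2$ with $N^+_S(v)=\{a,b\}$, we need $a$ and $b$ to have distinct out-neighbours in $S\setminus\{v,a,b\}$. This set has only two elements, $p$ and $q$. Hall's condition reduces to three requirements: each of $a,b$ has an out-neighbour in $\{p,q\}$, and together they reach both $p$ and $q$.

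Strong connectivity of $D[S]$, the fact that each $S_1$ vertex has exactly one out-arc, and the non-adjacency of $S_1$ vertices pin down the arcs between $\{a,b\}$ and $\{p,q\}$. I will handle the two main sequences separately.
\begin{itemize}
\item For $(3,2,2,1,1)$, the two $S_1$ vertices are non-adjacent. Each of them is therefore dominated by the vertices that are not its unique out-neighbour, and this determines $D[S]$ almost completely up to relabelling.
\item For $(3,2,2,2,1)$, $D[S]$ is a tournament. Here I will choose $v$ to be the $S_2$ out-neighbour of the $S_1$ vertex.
\end{itemize}
If the chosen $v$ fails Hall's condition, the plan is to show that the failure forces some $a\in N^+_S(v)$ to have all its $S$-out-neighbours inside $\{v,b\}$. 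That conflicts with the degree constraints, or else it lets us switch to another $S_2$ vertex of out-degree $5$ for which the matching exists.

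\textbf{Main obstacle.} The hardest part is the configurations in which the only $S_2$ vertex known to have out-degree $5$ fails the matching condition in $D[S]$. There we must prove that another $S_2$ vertex also has out-degree $5$. I would try Claim~\ref{Clxyz} first: applied to $x_0$ and a path $v\to a\to p$ with $(v,p)\notin A(D)$, it forces arcs into $\{a,p\}$ from the rest of $N^+_D(v)$, which includes $S^+$-vertices when $d^+_D(v)=6$. Failing that, I would use the extremal choice of $x_0$, namely $\alpha$, then $\beta$, then the lexicographic order of the out-degree sequence, as in the proof of Claim~\ref{Clu1S1u2S2}, to rule out the remaining patterns.
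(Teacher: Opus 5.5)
Your route is viable but different from the paper's, and as written it is an outline: the enumeration that carries all the content is not done, and the ``main obstacle'' is left open. The paper never classifies $D[S]$. It fixes the vertex $u\in S_2$ with $d^+_D(u)=5$ from Claim~\ref{ClS2OutDegree5} and splits on the types of $N^+_S(u)=\{v_1,v_2\}$.
\begin{itemize}
\item If one of them lies in $S_3$, or both lie in $S_2$, then $u$ itself works.
\item Otherwise, if $u$ fails, some $v_1\in S_1$. The vertex that works is then $v_2$ (when $v_1\to v_2$) or the out-neighbour $w$ of $v_1$. The case $w\in S_3$ is impossible, so $w\in S_2$. In both cases out-degree $5$ comes for free from Claim~\ref{Clu1S1u2S2}.
\end{itemize}
This avoids any classification and explains structurally why the switched-to vertex is always certified.

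Your enumeration also closes, and more easily than you fear.
\begin{itemize}
\item Only $(3,2,2,2,1)$ (a tournament) and $(3,2,2,1,1)$ (a tournament minus the edge between the two $S_1$ vertices) occur. The sequence $(3,2,1,1,1)$ is excluded by counting: it needs $8$ arcs, but only $7$ pairs are admissible. Your in-arc remark is not the reason, and there is no ``and so on''.
\item Up to isomorphism this gives six configurations. In four of them, every $S_2$ out-neighbour of an $S_1$ vertex works.
\item In the fifth, take $S_1=\{a,b\}$, $S_2=\{s_1,s_2\}$, $S_3=\{t\}$ with $a\to s_1$, $b\to s_2$ and $s_1\to s_2\to t\to s_1$. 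Here $s_1$ fails, because $b$ has no out-neighbour outside $N^+_S(s_1)$. But $s_2$ works, and $s_2$ already has out-degree $5$ by Claim~\ref{Clu1S1u2S2} since $b\to s_2$.
\item The sixth is the tournament in which the $S_1$ vertex points to the $S_3$ vertex (Figure~\ref{FiClS2OutDegree5}). Here ``the $S_2$ out-neighbour of the $S_1$ vertex'' does not exist. Your stated fallback (an $S_1$ out-neighbour of the $S_3$ vertex) does not apply either, since all out-neighbours of the $S_3$ vertex lie in $S_2$. You simply quote Claim~\ref{ClS2OutDegree5} and note that each $w_i$ is a strong Seymour vertex of $D[S]$ by the symmetry of the triangle.
\end{itemize}
So the obstacle never arises. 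Claim~\ref{Clxyz} and the extremal choice of $x_0$ are not needed beyond their use inside Claims~\ref{Clu1S1u2S2} and~\ref{ClS2OutDegree5}.

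One further correction: Hall's condition can also fail when both out-neighbours of $v$ reach only the same vertex of $\{p,q\}$. Your failure analysis omits this; it does not occur for the candidates above, but a complete argument must rule it out.
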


\begin{proof}
By Claim~\ref{ClS2OutDegree5},   there is $u\in S_2$ with $d_D^+(u)=5$. Suppose that $u$ is not a strong Seymour vertex in $D[S]$. 
Let $N_S^+(u)=\{v_1,v_2\}$. 
Suppose that $v_i\in S_3$ for some $i\in\{1,2\}$. We may assume that $v_1\in S_3$. 
Then $(v_1,v_2)\in A(D)$. Note that $v_2$ has at least one out-neighbor $v_3$ in $S\backslash\{u,v_1,v_2\}$ and $v_1$ dominates all vertices in $S\backslash\{u,v_1\}$, implying that $u$ is a strong Seymour vertex in $D[S]$, a contradiction. Thus, $v_1,v_2\in S_1\cup S_2$.

Suppose that $v_1,v_2\in S_2$. Then each vertex in $\{v_1,v_2\}$ has an out-neighbor in $S\backslash\{u,v_1,v_2\}$, and one vertex in $\{v_1,v_2\}$ has at least two out-neighbors in $S\backslash\{u,v_1,v_2\}$, implying that $u$ is a strong Seymour vertex in $D[S]$, a contradiction. 

Now we assume without loss of generality that $v_1\in S_1$. 
Let $w$ be the out-neighbor of $v_1$ in $D[S]$. 
If $w=v_2$, that is, $(v_1,v_2)\in A(D)$, then $v_2\in S_2$ and $d_D^+(v_2)=5$ by Claim~\ref{Clu1S1u2S2}. By Claim~\ref{S4S3}, $S_3\neq\emptyset$ and so $v_2$ has an out-neighbor in $S_3$. This implies that $v_2$ is a strong Seymour vertex in $D[S]$, as desired. So suppose that $w\neq v_2$. 
By Claims~\ref{ClDSPath4}~and~\ref{S4S3}, 
$w\in S_2\cup S_3$. 
Note that $N_S^+(v_2)\subset \{v_1,w\}$, since $u$ is not a strong Seymour vertex in $D[S]$. 
If $w\in S_3$, then 
$N_S^+(v_2)= \{v_1\}$ and so $v_2\in S_1$, a contradiction by Claim~\ref{ClDSPath4}.
Thus,  $w\in S_2$. 
Then  $d_D^+(w)=5$ by Claim~\ref{Clu1S1u2S2}. 
Together with Claim~\ref{S4S3}, one can observe that $w$ is a strong Seymour vertex in $D[S]$, as desired.  
\end{proof}

By Claim~\ref{ClS2OutDegree5Seymour}, let $u\in S_2$ be a vertex with $d_D^+(u)=5$ which is a strong Seymour vertex in $D[S]$. We set $S_{\ast}=N_{S^+}^+(u)$ and $S_{\ast}^+=N_D^+(S_{\ast})\backslash(S\cup S_{\ast})$. Note that $|S_{\ast}|=3$.

We remark that every vertex in $S_1$ has three out-neighbors in $S_{\ast}$, every vertex in $S_2$ has at least two out-neighbors in $S_{\ast}$, and so the vertex in $S_3$ has at least one out-neighbor in $S_{\ast}$. It follows that there are at least $11$ arcs from $S$ to $S_{\ast}$, and at most $4$ arcs from $S_{\ast}$ to $S$. Moreover, for every vertex $v\in S_{\ast}$, $v$ has at most $4$ out-neighbors in $S\cup S_{\ast}$, implying that $v$ has at least one out-neighbor in $S_{\ast}^+$. For a subset $X$ of $S_{\ast}$ with $|X|\geq 2$,  
$$\sum_{x\in X}d_{S_{\ast}^+}^+(x)\geq 5|X|-4-|X|(3-|X|)-{|X|\choose 2}\geq|X|(|X|-1).$$
By Lemma~\ref{lemma:outdegre-sum}, there is a complete matching from $S_{\ast}$ to $S_{\ast}^+$. Together with a complete matching from $N_S^+(u)$ to $N_S^{++}(u)$, we get a complete matching $M$ from $N_D^+(u)$ to $N_D^{++}(u)$. This implies that $u$ is a strong Seymour vertex in $D$, a contradiction. The proof is complete.

%%%%%%%%%%%%
\section{Remarks on tournaments without a strong Seymour vertex}\label{sec:tour}
%%%%%%%%%%%%

Denote by $TT_k$ a transitive tournament with $k$ vertices, where 
a \textit{transitive} digraph $D$ means that 
$(x,y), (y,z)\in A(D)$ implies $(x,z)\in A(D)$.

\begin{figure}[ht]
\begin{center}

\begin{tikzpicture}[
  scale=0.8,
  x=1cm,
  y=1cm,
  blk/.style={
    ellipse,
    draw,
    minimum width=1.6cm,
    minimum height=1cm,
    inner sep=1pt
  },
  dom/.style={
    -{Latex},
    densely dashed,
    shorten >=1pt,
    shorten <=1pt
  }
]
\node[blk] (C0) at ( 90:3.1) {$TT_{7}$}; \node[blk] (C1) at ( 30:3.1) {$TT_{3}$}; \node[blk] (C3) at (-30:3.1) {$TT_{3}$}; \node[blk] (C4) at (-90:3.1) {$TT_{9}$}; \node[blk] (C5) at (210:3.1) {$TT_{3}$}; \node[blk] (C2) at (150:3.1) {$TT_{11}$};
\node at ( 90:4.1) {$C_0$};   \node at ( 30:4.1) {$C_1$};
\node at (-30:4.1) {$C_3$};   \node at (-90:4.1) {$C_4$};
\node at (210:4.1) {$C_5$};   \node at (150:4.1) {$C_2$};
% the Hamiltonian cycle 0 -> 1 -> 3 -> 4 -> 5 -> 2 -> 0
\draw[dom,blue] (C0) to[bend left=12] (C1);
\draw[dom,blue] (C1) to[bend left=12] (C3);
\draw[dom,blue] (C3) to[bend left=12] (C4);
\draw[dom,blue] (C4) to[bend left=12] (C5);
\draw[dom,blue] (C5) to[bend left=12] (C2);
\draw[dom,blue] (C2) to[bend left=12] (C0);
% the remaining arcs
\draw[dom] (C3) to[bend left=10] (C0);
\draw[dom] (C1) to[bend right=10] (C4);
\draw[dom] (C5) to[bend left=10] (C3);
\draw[dom] (C4) to[bend right=10] (C2);
\draw[dom] (C0) to[bend left=10] (C5);
\draw[dom] (C2) to[bend right=10] (C1);
\draw[dom,red] (C0) to (C4);
\draw[dom,red] (C1) to (C5);
\draw[dom,red] (C2) to(C3);
\end{tikzpicture}
\end{center}
\caption{A tournament with no strong Seymour vertex, where the dashed arrow from
$C_i$ to $C_j$ means that $C_i$ dominates $C_j$.}
\label{fig:no-SSV}
\end{figure}

\begin{remark}[{David Dzitsoev}, personal communication]\label{rmk:no-SSV} \rm 
Let $Q$ be the tournament with $V(Q)=\{0,1,\ldots,5\}$ given by 
\[
\begin{array}{c|cccccc}
i & 0 & 1 & 2 & 3 & 4 & 5\\ \hline
N_Q^+(i) & \{1,4,5\} & \{3,4,5\} & \{0,1,3\} & \{0,4\} & \{2,5\} & \{2,3\}.
\end{array}
\]
Let $n_0,\ldots,n_5$ be positive integers and let $D$ be a \textit{blow-up} of
$Q$, that is,
$V(D)=\bigcup_{i=0}^{5}C_i$ for disjoint sets $C_0,\ldots,C_5$ with
$|C_i|=n_i$, where $D[C_i]$ is an empty graph and $C_i$ dominates
$C_j$ for every $j\in N_Q^+(i)$. 
Since 
\[
\begin{array}{c|cccccc}
i & 0 & 1 & 2 & 3 & 4 & 5\\ \hline
N_Q^{++}(i) & \{2,3\}   & \{0,2\}   & \{4,5\}   & \{1,2,5\} & \{0,1,3\} & \{0,1,4\}
\end{array}
\]
one can check that $D$ has no strong Seymour
vertex, whenever
\[
\begin{array}{lll}
n_1+n_4+n_5>n_2+n_3, &\quad n_4+n_5>n_2, &\quad n_0+n_1+n_3>n_4+n_5,\\[2pt]
n_0>n_1+n_5,         &\quad n_2+n_5>n_0+n_1+n_3, &\quad n_2>n_0+n_1 .
\end{array}
\]These conditions are satisfied, for instance, by
$(n_0,\ldots,n_5)=(7,3,11,3,9,3)$. 
It gives an oriented graph on $36$ vertices and
no strong Seymour vertex, disproving Conjecture~\ref{conj:SMC}.  
Moreover, taking these values and $D[C_i]=TT_{n_i}$
for every $i$ yields,  as shown in Figure~\ref{fig:no-SSV},  a tournament  with no strong Seymour
vertex, disproving Conjecture~\ref{conj:SMCinT}.
\end{remark}  Even in the classes where a strong Seymour vertex does exist, the two standard
candidates for it do not work, as the next two remarks show.

For a digraph $D$, 
a \textit{median order} of $D$ is an order $\tau: v_1,v_2,\ldots,v_{|V(D)|}$ of $V(D)$ such that the number $| \{ (v_i, v_j) \in A(D) : i < j \} |$ is maximized. 
The last vertex in a median order is called the \textit{feed vertex}. 
When $\tau$ is a median order, 
it holds that for every interval $I=[u, v]$ of $\tau$,
\begin{eqnarray} \label{eq:mo}
&& |N^+_D (u)\cap I| \geq|N^-_D (u)\cap I| \qquad \text{and}\qquad  | N^-_D (v)\cap I| \geq | N^+_D (v)\cap I|.
\end{eqnarray} 
An order $\tau$ of $V(D)$ is a \textit{local median order} 
if \eqref{eq:mo} holds for every interval $I=[u, v]$ of $\tau$.  
A \textit{tournament} is an orientation of a complete graph, and it has been proved in \cite{Havet2000} that the feed vertex (in any median order) of a tournament is a Seymour vertex.

\begin{figure}[ht]
\begin{center}
\begin{tikzpicture}[  x=1.0cm,y=1.0cm,every edge/.style={draw,postaction={decorate,decoration={markings,mark=at position 0.5 with {\arrow{latex}}}} }]
\draw [] (0,0) ellipse (0.6 and 1);
\node at (0,0) {$TT_{k+1}$};
\node at (0,-1.35) {$A_0$};
\node at (1,0) {$\rightarrow$};
\draw [] (2,0) ellipse (0.6 and 1);
\node at (2,0) {$TT_{k}$};
\node at (2,-1.35) {$A_1$};
\node at (3,0) {$\rightarrow$};
\draw [] (4,0) ellipse (0.6 and 1);
\node at (4,0) {$TT_k$};
\node at (4,-1.35) {$A_2$};
\node at (5,0) {$\rightarrow$};
\draw [] (6,0) ellipse (0.6 and 1.0);
\node at (6,0) {$TT_{k+2}$};
\node at (6,-1.35) {$A_3$};
\node at (7,0) {$\rightarrow$};
\draw [] (8,0) ellipse (0.6 and 1.0);
\draw[fill=black] (8,0) {coordinate (x)} circle (0.08);
\node at (8,-0.3) {$x$};
\node at (8,-1.35) {$A_4$};
\draw[->,densely dashed,blue] (0,1) .. controls (1,2) and (3,2) .. (3.8,1);
\draw[->,densely dashed,blue] (2,1) .. controls (3,1.8) and (5,1.8) .. (5.8,1);
\draw[->,densely dashed,blue] (4.2,1) .. controls (5,2) and (7,2) .. (7.8,1);
\draw[->,densely dashed,red] (8,-1) .. controls (6,-3) and (2,-3) .. (0.2,-1);
\draw[->,densely dashed,red] (8,-1) .. controls (6.5,-2) and (3.5,-2) .. (2.2,-1);
\draw[->,densely dashed,red] (6,-1) .. controls (4,-2) and (2,-2) .. (0.4,-1);
\end{tikzpicture}
\end{center}
\caption{A tournament in which the feed vertex of a median order is not a strong Seymour vertex, where the dotted arrow from $A_i$ to $A_j$ means that $A_i$ dominates $A_j$.}
\label{fig:FV-Not-SSV}
\end{figure}

\begin{remark} \rm 
Let $k\geq 4$ be an integer. 
Let $T$ be a tournament, as shown in Figure~\ref{fig:FV-Not-SSV},  with $V(T)=\bigcup_{i=0}^4 A_i$
and $A_i$ dominates $A_{i+1}\cup A_{i+2}$, where $T[A_0]=TT_{k+1}$, $T[A_1]=T[A_2]=TT_{k}$, $A_3=TT_{k+2}$, $A_4=TT_1$, where the addition in the indices of $A_i$ is modulo $5$. If we list the vertices of each of $A_i$ in a median order, then we can obtain a median order of $T$. Moreover, let $A_4=\{x\}$. In addition, for every
median order of $T$, $x$ is the feed vertex and it is not a strong Seymour vertex, as one can see that there exists no complete matching from $A_0\cup A_1$ to $A_2\cup A_3$ in $T$. Thus
the construction of tournament $T$ indicates that a feed vertex may not be a strong Seymour vertex.
\end{remark}

It is natural to look for a (strong) Seymour vertex among the minimum out-degree vertices, but the idea fails even in tournaments.

\begin{remark} \rm 
When $\delta^+(D)\le 4$, there is a strong Seymour vertex in $D$ whose out-degree is $\delta^+(D)$ by Theorem~\ref{thm:SMC-MinOutdegree}.
If we look for a (strong) Seymour vertex which, in addition, has minimum out-degree,
then the minimum out-degree condition in Theorem~\ref{thm:SMC-MinOutdegree} is sharp.

\begin{figure}[ht]
\begin{center}
\begin{tikzpicture}[x=1.0cm,y=1.0cm,every edge/.style=
        {draw,postaction={decorate,decoration={markings,mark=at position 0.5 with {\arrow{latex}}}}}]
\draw[fill=black] (0,0) {coordinate (x)} circle (0.08);
\node at (-0.3,0) {$x$};
\node at (0.75,0){$\rightrightarrows$};
\draw [] (2,0) ellipse (0.5 and 1);
\node at (2,0) {$T^*_5$};
\node at (3,0) {$\rightrightarrows$};
\draw [] (4,0) ellipse (0.5 and 1);
\node at (4,0) {$T_4$};
\node at (5,0) {$\rightrightarrows$};
\draw [] (6,0) ellipse (0.5 and 1);
\node at (6,0) {$T^*_{13}$};
\draw[->,densely dashed,blue] (4,1) .. controls (3,2) and (0.5,2) .. (0,0.1);
\draw[->,densely dashed,red] (6,1) .. controls (5,2) and (3,2) .. (2.2,1);
\draw[->,densely dashed,red] (6,-1) .. controls (4,-2) and (1,-2) .. (0,-0.1);
\end{tikzpicture}
\end{center}
\caption{A tournament such that no Seymour vertex has minimum out-degree}
\label{fig:5-outdegree}
\end{figure}

One can see that the tournament in Figure $\ref{fig:5-outdegree}$ has minimum out-degree $5$ but contains no desired (strong) Seymour vertex, here $T^*_{2k+1}$ refers to a $k$-regular tournament and $T_4$ refers to an arbitrary $4$-vertex tournament. Moreover, we can add all edges from $T^*_{13}$ to $T^*_5\cup \{x\}$, and from $T_4$ to $x$.
The obtained graph $T$ is a tournament with only one minimum out-degree vertex $x$, which is still not a Seymour vertex.
\end{remark}
%%%%%%%%%%%%%%
\section*{Acknowledgement}
%%%%%%%%%%%%%%
The authors are grateful to David Dzitsoev for  pointing out the construction in
Remark~\ref{rmk:no-SSV} using GPT.
The research of Yandong Bai and Binlong Li was supported in part by the National Key Research and Development Program of 
China (Grant No. 2026YFE0151700), National Natural Science Foundation of China (Grant Nos. 12542043, 12242111, 12131013), Guangdong Basic and Applied Basic Research Foundation (Grant No. 2023A1515030208), Shaanxi Fundamental Science Research Project for Mathematics and Physics (Grant No. 22JSZ009). 
Boram Park was supported by National Research Foundation of Korea (Grant No. RS-2025-00523206, NRF-2023K2A9A2A06059347) and the New Faculty Startup Fund from Seoul National University.


\begin{thebibliography}{19}

\bibitem{AGGWYZ2024}
J. Ai, S. Gerke, G. Gutin, S. Wang, A. Yeo, and Y. Zhou,
On Seymour's and Sullivan's second neighbourhood conjectures,
\textit{J. Graph Theory}, 105:413--426, 2024.

\bibitem{BM25}
Y. Bai and J. Ma,
Seymour's second neighborhood conjecture holds for $7$-anti-transitive oriented graphs,
in preparation.

\bibitem{BM08}
J. A. Bondy and U. S. R. Murty,
\textit{Graph Theory},
Graduate Texts in Mathematics 244, Springer, London, 2008.

\bibitem{BBKS2009}
J. Brantner, G. Brockman, B. Kay, and E. Snively,
Contributions to Seymour's second neighborhood conjecture,
\textit{Involve}, 2:387--395, 2009.

\bibitem{CC2023}
B. Chen and A. Chang,
A note on Seymour's second neighborhood conjecture,
\textit{Discrete Appl. Math.}, 337:272--277, 2023.

\bibitem{D20}
M. Daamouch,
Seymour's second neighborhood conjecture for $5$-anti-transitive oriented graphs,
\textit{Discrete Appl. Math.}, 285:454--457, 2020.

\bibitem{DFJN2022}
S. Dara, M. C. Francis, D. Jacob, and N. Narayanan,
Extending some results on the second neighborhood conjecture,
\textit{Discrete Appl. Math.}, 311:1--17, 2022.

\bibitem{DL95}
N. Dean and B. J. Latka,
Squaring the tournament---an open problem,
\textit{Congr. Numer.}, 109:73--80, 1995.

\bibitem{DGGK2024}
A. Espuny D\'{\i}az, A. Gir\~{a}o, B. Granet, and G. Kronenberg,
Seymour's second neighbourhood conjecture: random graphs and reductions,
\textit{Random Struct. Algorithms}, 66:e21251, 2025.

\bibitem{FY2007}
D. Fidler and R. Yuster,
Remarks on the second neighborhood problem,
\textit{J. Graph Theory}, 55:208--220, 2007.

\bibitem{Fisher1996}
D. Fisher,
Squaring a tournament: A proof of Dean's conjecture,
\textit{J. Graph Theory}, 23:43--48, 1996.

\bibitem{G2012}
S. Ghazal,
Seymour's second neighborhood conjecture for tournaments missing a generalized star,
\textit{J. Graph Theory}, 71:89--94, 2012.

\bibitem{H+21}
Z. R. Hassan, I. F. Khan, M. I. Poshni, and M. Shabbir,
Seymour's second neighborhood conjecture for $6$-antitransitive digraphs,
\textit{Discrete Appl. Math.}, 292:59--63, 2021.

\bibitem{Havet2000}
F. Havet and S. Thomass\'{e},
Median orders of tournaments: A tool for the second neighborhood problem and Sumner's conjecture,
\textit{J. Graph Theory}, 35:244--256, 2000.

\bibitem{HP24}
H. Huang and F. Peng,
An improved bound on Seymour's second neighborhood conjecture,
arXiv preprint, arXiv:2412.20234, 2024.

\bibitem{KanekoLocke2001}
Y. Kaneko and S. C. Locke,
The minimum degree approach for Paul Seymour's distance $2$ conjecture,
\textit{Congr. Numer.}, 148:201--206, 2001.

\bibitem{LX2017}
H. Liang and J.-M. Xu,
On Seymour's second neighborhood conjecture of $m$-free digraphs,
\textit{Discrete Math.}, 340:1944--1949, 2017.

\bibitem{Lim2020}
J. Lim,
A generalisation of Seymour's second neighbourhood conjecture,
arXiv preprint, arXiv:2001.07242, 2020.

\bibitem{Llado2013}
A. Llad\'{o},
On the second neighborhood conjecture of Seymour for regular digraphs with almost optimal connectivity,
\textit{Eur. J. Combin.}, 34:1406--1410, 2013.

\end{thebibliography}
\end{document}